\documentclass[9pt]{amsart} %
\usepackage[T1]{fontenc}
\usepackage[utf8]{inputenc}
\usepackage[english]{babel}
\usepackage[p,osf]{cochineal}
\usepackage[scale=.95,type1]{cabin}
\usepackage[zerostyle=c,scaled=.94]{newtxtt}
\usepackage[T1]{fontenc} 
\usepackage[utf8]{inputenc} 
\usepackage{amsthm}

\usepackage{amsmath}
\usepackage{amssymb}
\usepackage{pgfplots} 
\pgfplotsset{compat=newest} 
\usepgfplotslibrary{fillbetween,colorbrewer}
\usepackage{amsfonts}
\usepackage{amsaddr}
\usepackage{xspace,subcaption}
\usepackage{enumitem} 
\usepackage{csquotes}
\usepackage{xcolor}
\usepackage[colorlinks]{hyperref}
\pdfstringdefDisableCommands{\def\eqref#1{(\ref{#1})}}
\hypersetup{colorlinks, breaklinks, citecolor=teal,filecolor=black}
\usepackage{cleveref}
\usepackage{graphicx}
\usepackage{mathtools}
\mathtoolsset{centercolon}
\usepackage{bm}
\usepackage{pgfkeys}
\usepackage{pgf,interval}
\intervalconfig{soft open fences}
\usepackage[giveninits=true,url=false,doi=false,isbn=false,eprint=true,datamodel=mrnumber,sorting=nty,maxcitenames=4,maxbibnames=99,backref=false,block=space,backend=biber,bibstyle=phys]{biblatex} 
\AtEveryBibitem{\clearfield{month}}
\AtEveryCitekey{\clearfield{month}} 
\renewbibmacro{in:}{}
\ExecuteBibliographyOptions{eprint=true}
\DeclareFieldFormat[article]{title}{\emph{#1}} 
\DeclareFieldFormat{mrnumber}{\ifhyperref{\href{http://www.ams.org/mathscinet-getitem?mr=#1}{\nolinkurl{MR#1}}}{\nolinkurl{#1}}}
\DeclareFieldFormat{pmid}{\ifhyperref{\href{https://www.ncbi.nlm.nih.gov/pubmed/#1}{\nolinkurl{PMID#1}}}{\nolinkurl{#1}}}
\DeclareFieldFormat{eprint}{\ifhyperref{\href{https://arxiv.org/abs/#1}{\nolinkurl{arXiv:#1}}}{\nolinkurl{#1}}}
\renewbibmacro*{doi+eprint+url}{%
  \iftoggle{bbx:doi}{\printfield{doi}}{}
  \newunit\newblock%
  \printfield{mrnumber}%
  \newunit\newblock%
  \printfield{pmid}%
  \newunit\newblock%
  \printfield{eprint}%
  \iftoggle{bbx:url}{\usebibmacro{url+urldate}}{}}
\bibliography{ref2}
\usepackage{tikz}

\numberwithin{equation}{section} 
\newtheorem{theorem}{Theorem}[section]

\newtheorem{lemma}[theorem]{Lemma}
\newtheorem{definition}[theorem]{Definition}
\newtheorem{proposition}[theorem]{Proposition}

\usetikzlibrary{plotmarks}
\usepackage{grffile}
\usetikzlibrary{plotmarks}
\usetikzlibrary{calc,decorations.markings}

\newcommand{\1}{\mathbf{1}}

\newcommand{\R}{\mathbb{R}}  %
\ifdefined\C\renewcommand{\C}{\mathbb{C}}\else\newcommand{\C}{\mathbb{C}}\fi %
\newcommand{\N}{\mathbb{N}}  %
\newcommand{\deq}{%
  \mathrel{\vbox{\offinterlineskip\ialign{%
    \hfil##\hfil\cr
    $\scriptscriptstyle d$\cr
    \noalign{\kern.1ex}
    $=$\cr
}}}}
\newcommand{\BV}{BV}
\usepackage{subcaption}

\newcommand{\landauO}[1]{\mathcal{O}\left(#1\right)} 
\newcommand{\stO}[1]{\mathcal{O}_\prec\left(#1\right)} 

\newcommand*{\defeq}{\mathrel{\vcenter{\baselineskip0.5ex \lineskiplimit0pt\hbox{\scriptsize.}\hbox{\scriptsize.}}}=}
                     
\providecommand{\norm}[1]{\left\lVert#1\right\lVert} %
\providecommand{\abs}[1]{\left\lvert#1\right\rvert} %
\providecommand{\braket}[1]{\left\langle#1\right\rangle} %

\DeclareMathOperator{\E}{\mathbf{E}}

\renewcommand{\P}{\mathbf{P}}
\DeclareMathOperator{\Tr}{Tr}

\newcommand\restr[2]{{%
  \left.\kern-\nulldelimiterspace %
  #1 %
  \vphantom{\big|} %
  \right|_{#2} %
  }}
\makeatletter
\providecommand*{\diff}%
        {\@ifnextchar^{\DIfF}{\DIfF^{}}}
\def\DIfF^#1{%
        \mathop{\mathrm{\mathstrut d}}%
                \nolimits^{#1}\gobblespace
}
\def\gobblespace{%
        \futurelet\diffarg\opspace}
\def\opspace{\let\DiffSpace\! \ifx\diffarg(\let\DiffSpace\relax\else\ifx\diffarg\let\DiffSpace\relax\else\ifx\diffarg\{\let\DiffSpace\relax\fi\fi\fi\DiffSpace}  

\title{Fluctuations of Functions of Wigner Matrices} 
\author{L\'aszl\'o Erd\H{o}s \and Dominik Schr\"{o}der} 
\address{IST Austria, Am Campus 1, 3400 Klosterneuburg, Austria}
\email{lerdos@ist.ac.at}
\email{dschroed@ist.ac.at}
\date{\today}
\subjclass[2010]{60B20, 15B52} 
\keywords{Linear eigenvalue statistics; Central limit theorem; Non-Gaussian
fluctuation} %

\begin{document}
\maketitle
\section{Introduction}
The density of states of an $N\times N$ Wigner random matrix
$H=H^{(N)}$ converges to the
Wigner semicircular law~\cite{MR77805}.
More precisely, for any continuous function $f\colon\R\to\C$
\begin{align}\lim_{N\to\infty}\frac{1}{N}\Tr f(H)=\lim_{N\to\infty}\frac
{1}{N}\sum_{k=1}^N f(\lambda_k)
= \int f(x)\,\mu_{sc}(\diff x) \label{linStats}
\end{align}
where $\lambda_1,\dots,\lambda_N$ are the (real) eigenvalues of $H$ and
$\,\mu_{sc}(\diff x)\defeq\frac{1}{2\pi}\sqrt{(4-x^2)_+}\diff x$.

It is well known that for regular functions $f$, the normalized linear
eigenvalue statistics
$\frac{1}{N} \Tr f(H)$ have an asymptotically Gaussian fluctuation on
scale of order $1/N$, see,
for example, \cite{MR3116567,MR2561434,MR2222385,MR3063494,MR2829615,MR3568789,MR3018879} 
for different results in this direction, also for other random matrix ensembles.
To our knowledge, this result under
the weakest regularity condition on $f$
was proved in \cite{MR3116567}; for general Wigner matrices
$f\in H^{1+\epsilon}$ was required, while for Wigner matrices
with substantial GUE component $f\in H^{1/2+\epsilon}$ was sufficient.
Notice that the order of the fluctuation $1/N$ is much smaller than
$1/\sqrt{N}$ which would be predicted by
the standard central limit theorem (CLT) if the eigenvalues
were weakly dependent. The failure of CLT on scale $1/\sqrt{N}$
is a signature of the strong correlations among the eigenvalues.

In this paper we investigate the individual matrix elements of $f(H)$.
We will show that the semicircle law \eqref{linStats}
holds also for any diagonal matrix element $f(H)_{ii}$ and not only for
their average, $\frac{1}{N} \Tr f(H)$; however, the
corresponding fluctuation is much larger, it is on scale $1/\sqrt{N}$.
Moreover, the limiting distribution of the rescaled fluctuation
is not necessarily Gaussian; it also depends on the distribution of the
matrix element $h_{ii}$. Similar fluctuation results hold for
the off diagonal matrix elements $f(H)_{ij}$, $i\ne j$. For regularity
condition, we merely assume that $f$ is of bounded variation,
$f\in BV$.
We also prove an effective error bound of order $N^{-2/3}$ that we can
improve to
$N^{-1}$
if $f'\in L^\infty$,
i.e. we provide
a two-term expansion for each matrix element of $f(H)$.

Similar results (with less precise error bounds) were obtained
previously in \cite{MR2489497} for Gaussian random matrices and in \cite
{1103.2345,MR3090549,MR2880032} for general Wigner matrices
under the much stronger regularity assumptions that
\begin{align}
\quad \int_\R(1+\abs{\xi})^3 \abs{\widehat f(\xi)}\diff\xi<\infty\quad
\text{or}\quad\int_\R(1+\abs{\xi})^{2s}\abs{\widehat f(\xi)}^2\diff\xi
<\infty\quad\text{for some }s>3,\label{stronger regularity}
\end{align}
where $\widehat f(\xi)\defeq\int_\R e^{-i\xi x}f(x)\diff x$.
The main novelty of the current work is thus to relax these regularity
conditions to $f\in BV$. In addition, \cite
{1103.2345,MR3090549,MR2880032} assumed that in the case of
complex Hermitian matrices, the real and imaginary part of the entries
have equal variance. Our approach does not require this technical assumption.
We also refer to \cite{MR3155024} where similar questions have
been studied for more general statistics of the form $\Tr[f(H)A]$ for
non-random matrices $A$ under the fairly strong regularity condition
$\int(1+\abs{\xi})^4\lvert\widehat f(\xi)\rvert\diff\xi<\infty$.

A special case of these questions is when the test function $f(x)$ is
given by $\varphi_z(x) = (x-z)^{-1}$ for some complex parameter $z$
in the upper half plane, $\eta\defeq\Im z>0$. In fact, for $f$ which
are analytic in a complex neighborhood of $[-2,2]$, a simple contour
integration shows that for the linear statistics
it is sufficient to understand the resolvent of $H$, i.e., $\varphi
_z(H)=(H-z)^{-1}$ for
any fixed $z$ in the upper half plane. If $f$ is less regular, one may
still express $f(H)$
as an integral of the resolvents over $z$, weighted by the $\partial
_{\bar z}$-derivative of an almost analytic extension of $f$
to the upper half plane (Helffer-Sj\"ostrand formula). In this case,
the integration effectively involves
the regime of $z$ close to the real axis, so the resolvent $(H-z)^{-1}$
and its matrix elements need to be
controlled even as $\eta\to0$ simultaneously with $N\to\infty$. These
results are commonly called {\it local semicircle laws}.
They hold down to the optimal scale $\eta\gg1/N$ with an optimal error
bound of order $1/\sqrt{N\eta}$ for
the individual matrix elements and a bound of order $1/N\eta$ for the
normalized trace of the resolvent
(see, e.g. \cite{MR2871147}). With the help of the Helffer-Sj\"ostrand
formula, more accurate local laws can be transformed to weaker
regularity assumptions on the test function in the linear eigenvalue
statistics, see \cite{MR3116567}.
In this paper we replace the Helffer-Sj\"ostrand formula by Pleijel's
formula \cite{MR167751} that provides
a more effective functional calculus for functions with low regularity.

A similar relation between regularity and local laws holds for
individual matrix elements, $f(H)_{ii}$. Using the
Schur complement formula one can relate $f(H)_{ii}$ to the \emph
{difference} of a linear statistics for $H$
and for its minor $\widehat H$ obtained by removing the $i$-th row and
column from $H$.
In a recent paper \cite{MR3805203} we investigated the
fluctuations of this difference without
directly connecting it to $f(H)_{ii}$.
Applied to a special family of test function $f(x) =\abs{x-a}$, the
difference of linear statistics is closely related to
the fluctuation of Kerov's interlacing sequences of the eigenvalues of
$H$ and its minor.

Motivated by this application, Sasha Sodin pointed out that this
fluctuation can be related to the fluctuation of a single matrix
element of the resolvent by the Markov correspondence, see \cite{MR3733197}
for details. It is therefore natural to ask
if one could use the fluctuation result from \cite{MR3805203}
on the interlacing sequences to strengthen the existing results
on the fluctuations of the matrix elements of the resolvent and hence
of $f(H)$. In fact, not the result itself, but the core of
the analysis in \cite{MR3805203} can be applied; this is the
content of the current paper.
We thank Sasha for asking this question and calling our attention to
the problem of fluctuation
of the matrix elements of $f(H)$ and to the previous literature \cite
{MR2489497,1103.2345,MR3090549,MR2880032}.
Furthermore, he pointed out to us that the contour integral formula
from Pleijel's paper \cite{MR167751} could potentially
replace the
Helffer-Sj\"ostrand formula in our argument to the end of further
reducing the regularity assumptions on $f$.
We are very grateful to him for this insightful idea that we believe
will have further applications.

\section{Main results}
We consider complex Hermitian and real symmetric random $N\times N$
matrices $H=(h_{ij})_{i,j=1}^N$ with the entries being independent (up
to the symmetry constraint $h_{ij}=\overline{h_{ji}}$) random variables
satisfying
\begin{equation}\label{moments}
\E h_{ij}=0,\quad\E\abs{h_{ij}}^2=\frac{s_{ij}}{N}\quad\text
{and}\quad\E\abs{h_{ij}}^p \leq\frac{\mu_p}{N^{p/2}}
\end{equation}
for all $i,j,p$ and some absolute constants $\mu_p$. We assume that the
matrix of variances is approximately
stochastic, i.e.
\begin{align}\label{stochasticS}\sum_j s_{ij} = N+ \landauO{1}
\end{align}
to guarantee
that the limiting density of states is the Wigner semicircular law.

To formulate the error bound concisely we introduce the following
commonly used (see, e.g., \cite{MR3068390}) notion of high
probability bound.
\begin{definition}[Stochastic Domination]\label{def:stochDom}
If
\[
X=\left( X^{(N)}(u) \,\lvert\, N\in\N, u\in U^{(N)} \right)\quad\text
{and}\quad Y=\left( Y^{(N)}(u) \,\lvert\, N\in\N, u\in U^{(N)} \right)
\]
are families of random variables indexed by $N$, and possibly some
parameter $u$, then we say that $X$ is stochastically dominated by $Y$,
if for all $\epsilon, D>0$ we have
\[
\sup_{u\in U^{(N)}} \P\left[X^{(N)}(u)>N^\epsilon Y^{(N)}(u)\right
]\leq N^{-D}
\]
for large enough $N\geq N_0(\epsilon,D)$. In this case we use the
notation $X\prec Y$. Moreover, if we have $\abs{X}\prec Y$, we also
write $X=\stO{Y}$.
\end{definition}
It can be checked (see \cite[Lemma 4.4]{MR3068390}) that
$\prec$ satisfies the usual arithmetic properties, e.g.
if $X_1\prec Y_1$ and $X_2\prec Y_2$, then also $X_1+X_2\prec Y_1 +Y_2$
and $X_1X_2\prec Y_1 Y_2$.
We will say that a (sequence of) events $A=A^{(N)}$ holds with \emph
{overwhelming probability} if $\P(A^{(N)}) \ge1- N^{-D}$ for any
$D>0$ and $N\ge N_0(D)$. In particular, under the conditions \eqref
{moments}, we have $h_{ij}\prec N^{-1/2}$ and $\max_k\abs{\lambda_k}
\le3$ with overwhelming
probability.

We further introduce a notion quantifying the rate of weak convergence
of distributions. We say that a sequence of random variables $X_N$ \emph
{converges in distribution at a rate $r(N)$} to $X$ if for any $t\in\R
$ it holds that
\[
\E e^{i t X_N }=\E^{itX}+\,\mathcal{O}_t\left(r(N)\right),
\]
where we allow the coefficient of the rate to be $t$-dependent
uniformly for $\abs{t}\le T$ for any fixed $T$. If $X_N$ converges in
distribution at a rate $r(N)$, we write
\[
X_N \deq X + \landauO{r(N)}.
\]
In particular, this implies that
\[
\E\Phi(X_N) = \E\Phi(X) + \landauO{r(N)}
\]
for any analytic function $\Phi$ with compactly supported Fourier transform.

Our main result for the diagonal entries of $f(H)$ is summarized in the
following theorem. By permutational symmetry there is no loss in
generality in studying $f(H)_{11}$. By considering real and imaginary
parts separately, from now on we always assume that $f$ is real valued.

\begin{theorem}\label{mainThmDiag}
Let the Wigner matrix $H$ satisfy \eqref{moments}, $s_{ij}=1$ for
$i\not= j$ and $s_{ii}\le C$ for all $i$, $\E\abs{h_{1j}}^4=\sigma
_4/N^2$ for $j=2,\dots,N$ and $\E h_{ij}^2=\sigma_2/N$ with some $\sigma
_2, \sigma_4 \in\R$. Moreover, let $f\in\BV([-3,3])$
be some real-valued function of bounded variation and assume that
$h_{11}\deq\xi_{11}/\sqrt N$ where $\xi_{11}$ is an $N$-independent
random variable. Then
\begin{align}\label{eq:mainTHM}f(H)_{11} \deq\int f(x)\,\mu_{sc}(\diff
x) + \frac{\widehat\Delta_{f} + \xi_{11} \int f(x)x\,\mu_{sc} (\diff
x)}{\sqrt N} +
\begin{cases}\landauO{N^{-1}}&\text{if }f'\in L^\infty,\\ \landauO
{N^{-2/3}}&\text{else},
\end{cases}
\end{align}
where $\widehat\Delta_{f}$ is a centered Gaussian random variable of
variance
\begin{align}\label{variance}
\E\left(\widehat\Delta_f\right)^2 =V_{f,1}+V_{f,1}^{(\sigma_2)}-
2V_{f,2} - (1+\sigma_2)V_{f,3} + (\sigma_4-2-\sigma_2^2)V_{f,4},
\end{align}
and the $V_{f,i}$ and $V_{f,1}^{(\sigma_2)}$ are given by quadratic
forms defined in \eqref{Vi}.

More precisely, \eqref{eq:mainTHM} means that, to leading order
\begin{align}f(H)_{11}=\int f(x)\,\mu_{sc}(\diff x)+\stO{N^{-1/2}}\label{lolnd}
\end{align}
and, weakly
\begin{align}\label{weak}
T_f^{(N)}\defeq\sqrt N\left[f(H)_{11} - \int f(x)\,\mu_{sc}(\diff
x)\right]-\xi_{11}\int f(x)x\,\mu_{sc}(\diff x) \Rightarrow\widehat
\Delta_f
\end{align}
at a speed
\[
\E\left(T_f^{(N)}\right)^k = \E\widehat\Delta_f^k+
\begin{cases}\landauO{\frac{ C^k (k/2)!}{\sqrt{N}}}&\text{if }f'\in
L^\infty,\\
\landauO{\frac{C^k(k/2)!}{N^{1/6}}} &\text{else}
\end{cases}
\]
for all $k$. The speed of convergence in the L\'evy metric $d_L$ is
given by
\begin{align}\label{speed}
d_{L} (T_f^{(N)}, \widehat\Delta_f )\le C(f) \frac{\log\log N}{\sqrt
{\log N}}
\end{align}
with some constant depending on $f$.
\end{theorem}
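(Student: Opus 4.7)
The plan is to convert the problem about $f(H)_{11}$ into an analysis of the single resolvent entry $G_{11}(z) \defeq ((H-z)^{-1})_{11}$ for spectral parameters $z = E + i\eta$ with small $\eta > 0$. Since $f$ is only assumed to be $\BV$, I use Pleijel's contour-integral formula \cite{pleijel1963theorem} in place of Helffer--Sj\"ostrand; this expresses $f(H)_{11}$ as an integral of $G_{11}$ against a kernel built from $\diff f$, requiring only one distributional derivative of $f$ and making the regularity assumption $f \in \BV$ natural.

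The Schur complement formula applied to the first row and column gives
\begin{equation*}
G_{11}(z) = \frac{1}{h_{11} - z - \sum_{i,j\geq 2} h_{1i}\widehat G_{ij}(z) h_{1j}},
\end{equation*}
where $\widehat G(z) \defeq (\widehat H - z)^{-1}$ is the resolvent of the minor $\widehat H$ obtained by deleting the first row and column; the first row $(h_{1j})_{j\geq 2}$ is independent of $\widehat H$. Splitting $\sum_{i,j\geq 2} h_{1i}\widehat G_{ij}(z) h_{1j} = \tfrac{1}{N}\Tr \widehat G(z) + Z_1(z)$ into conditional mean and centered fluctuation, and invoking the local semicircle law $\tfrac{1}{N}\Tr\widehat G(z) - m_{sc}(z) = \stO{(N\eta)^{-1}}$ together with the Hanson--Wright / isotropic bound $Z_1(z) = \stO{(N\eta)^{-1/2}}$ and the defining equation $m_{sc}(z)(z+m_{sc}(z)) = -1$, a Taylor expansion yields
\begin{equation*}
G_{11}(z) - m_{sc}(z) = -m_{sc}(z)^2\bigl(h_{11} - Z_1(z) - \bigl[\tfrac{1}{N}\Tr\widehat G(z) - m_{sc}(z)\bigr]\bigr) + \stO{(N\eta)^{-1}}.
\end{equation*}

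Substituting this into the Pleijel representation produces three contributions. The deterministic leading term gives $\int f\,\diff\mu_{sc}$ by Stieltjes inversion. The $h_{11}$ term integrates against $\Im m_{sc}(E+i0)^2$; using $m_{sc}^2 = -1 - z m_{sc}$ and a short residue computation, its contribution equals exactly $(\xi_{11}/\sqrt{N})\int f(x)x\,\mu_{sc}(\diff x)$, matching the explicit non-Gaussian shift in \eqref{eq:mainTHM}. The remaining random piece -- the integral of $m_{sc}(z)^2 Z_1(z)$ plus the trace correction against $\diff f$ -- is, conditionally on $\widehat H$, a centered quadratic form in $(h_{1j})_{j\geq 2}$. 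Its asymptotic normality with the explicit variance \eqref{variance} is extracted by importing the core quadratic-form CLT and cumulant analysis of \cite{2016arXiv160805163E}: the $V_{f,i}$ arise as double spectral integrals of $\tfrac{1}{N}\Tr(\widehat G(z_1)\widehat G(z_2))$ reduced via residue calculus to quadratic forms in $f$, while the $\sigma_2,\sigma_4$ dependence tracks respectively the real/imaginary asymmetry of $\E h_{ij}^2$ and the fourth cumulant correction from $\E \abs{h_{1j}}^4$.

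The main technical obstacle is propagating these expansions through the Pleijel integral uniformly down to scales $\eta$ close to $1/N$, where the resolvent control degrades. For $f' \in L^\infty$ an integration by parts in the Pleijel formula shifts one derivative from $\diff f$ onto the resolvent, which together with the isotropic local law of \cite{Erdos20121435} gives the sharp $N^{-1}$ error. For general $f \in \BV$ one optimizes the truncation scale $\eta_\ast$ to balance the fluctuation $\stO{(N\eta_\ast)^{-1/2}}$ against the cut-off bias, yielding $\eta_\ast \sim N^{-2/3}$ and the corresponding $N^{-2/3}$ rate. The moment bounds on $T_f^{(N)}$ follow from extending the cumulant expansion of \cite{2016arXiv160805163E} to arbitrary order $k$, and the L\'evy-metric bound \eqref{speed} is then extracted via Esseen's inequality with truncation parameter $T \sim \sqrt{\log N}$, which converts the factorial growth $C^k(k/2)!$ of the moments into the $(\log\log N)/\sqrt{\log N}$ rate.
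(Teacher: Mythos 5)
Your proposal follows essentially the same route as the paper: Pleijel's formula to reduce $f(H)_{11}$ to an integral of $G_{11}$ against $\diff f$, the Schur complement to separate $G_{11}$ into a deterministic leading part, the $h_{11}$ contribution, and a centered quadratic form $X(z)$ in the first row of $H$ conditioned on the minor, then the quadratic-form CLT and Wick-type factorization from \cite{2016arXiv160805163E}, and finally Esseen's inequality with $T\sim\sqrt{\log N}$ for the L\'evy metric. The decomposition, the residue computation identifying the $\xi_{11}\int fx\,\mu_{sc}$ shift, the variance integral, and the higher-moment argument all match the paper's structure.

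The one place where your mechanism is off is the source of the $N^{-1}$ improvement when $f'\in L^\infty$. You suggest integrating by parts in the Pleijel formula to ``shift one derivative from $\diff f$ onto the resolvent,'' but $\partial_x m_\mu$ is \emph{more} singular near the spectrum, so that manipulation does not by itself buy anything; moreover, by Cauchy--Riemann $\partial_x m = -i\partial_\eta m$, which just reproduces the boundary-term identity (the paper's Lemma~\ref{Stokes}) that is already used for both regularity classes and for all leading-order terms. The actual mechanism, visible in the error estimate for $\E\big(\widehat\Delta_f^{(N)}\big)^2$, is that when $\diff f = f'\,\diff x$ with $f'$ bounded, the double integral of the kernel $\Psi/\Phi$ against $\diff f(x)\,\diff f(x')$ gains a power through the $\abs{x-x'}^2$ (or $\abs{x-x'}$) term in the denominator $\Phi$: one can integrate through the near-diagonal singularity instead of crudely bounding $\Phi \ge \eta_0$ and paying a $1/\eta_0$, which is what one is forced to do for general $f\in\BV$. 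This removes the $\eta_0^{-1/2}$ from the variance error, and it is \emph{that} which permits pushing $\eta_0$ all the way down to $N^{-1+\epsilon}$ (and hence reduces the deterministic Pleijel remainder $\eta_0\|\diff f\| + \|f\|_1/M + \log(\cdot)/N$ to order $N^{-1}$). You would need to replace the integration-by-parts step with this kernel-integrability argument to close the $f'\in L^\infty$ case.
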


The corresponding result for the off diagonal terms is as follows.
\begin{theorem}\label{mainThmOffDiag}
Under the assumptions of Theorem \ref{mainThmDiag},
\begin{align}\label{eq:mainTHMod}f(H)_{12} \deq\frac{1}{\sqrt N} \left
[\widetilde\Delta_{f} + \xi_{12} \int f(x)x\,\mu_{sc} (\diff x)\right]+
\begin{cases}\landauO{N^{-1}}&\text{if }f'\in L^\infty,\\ \landauO
{N^{-2/3}}&\text{else},
\end{cases}
\end{align}
where $\widetilde\Delta_{f}$ is a centered complex Gaussian satisfying
\[
\E\widetilde\Delta_{f}^2 = V_{f,1}^{(\sigma_2)} - V_{f,2} -\sigma
_2V_{f,3},\quad\E\abs{\widetilde\Delta_{f}}^2 = V_{f,1} - V_{f,2}-V_{f,3}.
\]
and the $V_{f,i}$ and $V_{f,1}^{(\sigma_2)}$ are defined in \eqref{Vi}.

More precisely, \eqref{eq:mainTHMod} means that
\begin{align}f(H)_{12}=\stO{N^{-1/2}}\label{lolnod}
\end{align}
and, introducing the notation
\[
S_f^{(N)}\defeq\sqrt N f(H)_{12}-\xi_{12}\int f(x)x\,\mu_{sc}(\diff x),
\]
we have that
\begin{align*} \E\left( S_f^{(N)} \right)^k\left(\overline{S_f^{(N)}
}\right)^l = \E\widetilde\Delta_f^k \overline{\widetilde\Delta_f}^l +
\begin{cases}\landauO{\frac{((k+l)/2)!}{\sqrt{N}}}&\text{if }f'\in
L^\infty,\\
\landauO{\frac{((k+l)/2)!}{N^{1/6}}} &\text{else}
\end{cases}
\end{align*}
holds for all $k,l\in\N$. The analogues of \eqref{weak}
and \eqref{speed} also hold for $T_f^{(N)}$ replaced with $S_f^{(N)}$.
\end{theorem}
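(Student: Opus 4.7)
The plan is to follow the blueprint of the proof of Theorem \ref{mainThmDiag} but with the Schur complement now taken across two rows and columns rather than one. First I would use Pleijel's formula to represent $\sqrt N f(H)_{12}$ as an integral of the resolvent matrix element $G_{12}(z) = ((H-z)^{-1})_{12}$, which reduces the problem to a quantitative analysis of $G_{12}(z)$ uniformly in $z$ down to the optimal scale $\Im z \gg 1/N$ dictated by the $\BV$-regularity of $f$.

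Second, a $2\times 2$ Schur complement obtained by deleting rows and columns $1$ and $2$ yields the key identity
\[
G_{12}(z) = -m(z)^2\bigl(h_{12} - \mathbf h_1^{*}G^{(12)}(z)\mathbf h_2\bigr) + \stO{(N\Im z)^{-1}},
\]
where $m$ is the semicircular Stieltjes transform, $G^{(12)}$ is the resolvent of the minor $H^{(12)}$, and $\mathbf h_1,\mathbf h_2$ are the independent deleted off-diagonal vectors; the remainder is controlled by the isotropic local semicircle law. Inserting this into the Pleijel representation splits $\sqrt N f(H)_{12}$ into (i) an explicit $\xi_{12}$-contribution produced by the identity $\int f(x)x\,\mu_{sc}(\diff x) = \frac{1}{2\pi i}\oint f(z) m(z)^2\,\diff z$, which reproduces the deterministic term in \eqref{eq:mainTHMod}; (ii) a bilinear form $\sqrt N\,\mathbf h_1^{*}\Phi_f(H^{(12)})\mathbf h_2$ against the conditionally deterministic kernel $\Phi_f$ obtained by Pleijel-integrating $m(z)^2 G^{(12)}(z)$; and (iii) a lower-order remainder.

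Third, I would apply a CLT for bilinear forms of independent entries, conditional on $H^{(12)}$, to show that $\sqrt N\,\mathbf h_1^{*}\Phi_f\mathbf h_2$ converges to a centered complex Gaussian whose complex variance and pseudo-variance are quadratic expressions in $\Phi_f$ determined by $\E\abs{h_{1j}}^2 = 1/N$ and $\E h_{1j}^2 = \sigma_2/N$, respectively. Replacing $G^{(12)}(z)$ by its deterministic limit $m(z) I$ via the isotropic local law, and then collapsing the resulting double contour integrals by a second application of Pleijel, yields exactly the quadratic forms $V_{f,1}$, $V_{f,1}^{(\sigma_2)}$, $V_{f,2}$ and $V_{f,3}$ of \eqref{Vi} with the coefficients displayed in the statement. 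Notably, the $V_{f,4}$ term present in the diagonal case is absent here, because the entries of rows $1$ and $2$ feeding the bilinear form are independent, so no fourth-cumulant ($\sigma_4$) contribution of a single entry can appear.

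The main obstacle lies in step three: with only $f\in\BV$, Pleijel's formula forces an integration against resolvents at $\Im z$ of order $N^{-1+\epsilon}$, where the isotropic local law for $G^{(12)}$ is still sharp but the remainder in the bilinear CLT is only marginally controlled. I would establish the high-moment bounds on $S_f^{(N)}-\widetilde\Delta_f$ by the same cumulant expansion/recursion scheme used for $T_f^{(N)}$ in the diagonal proof, interpolating in $\Im z$ between the short-scale estimate available when $f'\in L^\infty$ and the general $\BV$ bound, to obtain the $\landauO{N^{-1/2}}$ and $\landauO{N^{-1/6}}$ rates stated in the theorem. The L\'evy distance estimate then follows from these moment bounds by a Berry--Esseen argument identical to the one in the diagonal case.
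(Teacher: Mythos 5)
Your outline reproduces the paper's argument essentially step for step: Pleijel's formula to pass from $f(H)_{12}$ to the resolvent entry $G_{12}(z)$, the $2\times 2$ Schur complement yielding $G_{12}(z)=-m(z)^2\bigl(h_{12}-\braket{h_1,\widehat G(z)h_2}\bigr)+\stO{(N\abs{\eta})^{-1}}$, separating the $\xi_{12}$ term from the bilinear-form fluctuation $Y(z)=\sqrt N\braket{h_1,\widehat G(z)h_2}$, computing the conditional variance and pseudo-variance of $Y$ via the local semicircle law (which is exactly how the $V_{f,1}$, $V_{f,1}^{(\sigma_2)}$, $V_{f,2}$, $V_{f,3}$ terms arise with the stated coefficients and how the $\sigma_4$/$V_{f,4}$ term drops out because rows $1$ and $2$ are independent), and finally a Wick-type factorization for the higher moments to get the $N^{-1/2}$ and $N^{-1/6}$ rates and the L\'evy-distance bound.
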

The fluctuation results in Theorems \ref{mainThmDiag} and \ref
{mainThmOffDiag} for test functions satisfying the stronger regularity
assumption \eqref{stronger regularity} and without explicit error terms
have been proven in \cite{1103.2345,MR3090549}.

We also remark that \eqref{weak} implies the joint asymptotic normality
of the fluctuations of $f(H^{(N)})_{11}$ for
several test functions.
More precisely, for any $f\in\BV$ we define $T_f^{(N)}$ via \eqref{weak}.
Then for any given functions $f_1, f_2, \dots, f_k\in\BV$, the random
$k$-vector
\[
\left( T_{f_1}^{(N)}, T_{f_2}^{(N)}, \dots, T_{f_k}^{(N)} \right)
\]
weakly converges to a Gaussian vector with covariance given via the
variance \eqref{variance} using the parallelogram identity.
Similar result holds for the joint distribution of the off diagonal
elements $f_k(H)_{12}$. One may specialize this result to the case when
$f$ is a characteristic function, i.e. we may define
\[
T_x^{(N)} \defeq T_{\1_{[-3,x]}}^{(N)}, \qquad x\in[-3,3],
\]
where $\1_{[a,b]}$ is the characteristic function of the interval
$[a,b]$. Clearly, the finite dimensional marginals
of the sequence of stochastic processes
$\{ T_x^{(N)}, x\in[-3,3]\}$
are asymptotically Gaussian. The tightness remains an open question.

\section{Pleijel's inversion formula}
Our main tool relating $f(H)_{ij}$ to the resolvent $G=G(z)=(H-z)^{-1}$
is summarized in the following proposition. We formulate it for general
probability measures $\mu$ supported on some $[-K,K]$ and their
Stieltjes transform
\[
m_\mu(z)=\int\frac{1}{\lambda-z}\, \mu(\diff\lambda).
\]
Later we will apply the proposition to $\mu=\rho_N$ and $\mu=\widetilde
\rho_N$ with $\rho_N$, $\widetilde\rho_N$ being the spectral measures
of typical diagonal and off-diagonal entries
\[
\int f\diff\rho_{N}=f(H)_{11}, \quad\int f\diff\widetilde\rho
_{N}=f(H)_{12}.
\]

\begin{proposition}\label{prop_int_pleijel}
Let $L>K>0$ and let $\mu$ denote a probability measure which is
supported on $[-K,K]$ and let $f\in\BV([-L,L])$ be a function of
bounded variation which is compactly supported in $[-L,L]$.
Then
\begin{align}\label{eq:int_pleijel}\int f(\lambda)\,\mu(\diff\lambda
)&=\frac{1}{2\pi}\iint_{I_{\eta_0}^M} m_\mu(x+i\eta)\diff\eta\diff
f(x)+\frac{1}{\pi}\int_{-L}^L f(x)\Im m_\mu(x+M i)\diff x\\\nonumber
&\qquad\qquad+\landauO{\eta_0\norm{m_\mu(\cdot+i\eta_0)}_{L^1(\abs
{\diff f})}}\\ &=\frac{1}{2\pi}\iint_{I_{\eta_0}^M} m_\mu(x+i\eta)\diff
\eta\diff f(x)+\landauO{\eta_0\norm{m_\mu(\cdot+i\eta_0)}_{L^1(\abs
{\diff f})}+\frac{1}{M}\norm{f}_{1}}\nonumber
\end{align}
holds for any $\eta_0,M>0$ where $I_{\eta_0}^M\defeq[-L,L]\times
([-M,M]\setminus[-\eta_0,\eta_0])$, $\norm{\cdot}_{1}=\norm{\cdot
}_{L^{1}(\diff x)}$ and $\diff f$ is understood as the (signed)
Lebesgue--Stieltjes measure.
\end{proposition}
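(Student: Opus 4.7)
The plan is to reduce identity \eqref{eq:int_pleijel} to the Stieltjes--Perron inversion at height $\eta_0$ via the Cauchy--Riemann equations, and then control the resulting regularization error using the bounded variation of $f$. First I would exploit the symmetry $m_\mu(x-i\eta) = \overline{m_\mu(x+i\eta)}$ to fold the double integral over the slit rectangle, obtaining
\[\iint_{I_{\eta_0}^M}m_\mu(x+i\eta)\, d\eta\, df(x) = 2\iint_{[-L,L]\times[\eta_0,M]}\Re m_\mu(x+i\eta)\, d\eta\, df(x).\]

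Next, since $m_\mu$ is holomorphic on the upper half plane, the Cauchy--Riemann equation $\partial_x\Re m_\mu = \partial_\eta \Im m_\mu$ together with the fundamental theorem of calculus gives $\partial_x\int_{\eta_0}^{M}\Re m_\mu(x+i\eta)\, d\eta = \Im m_\mu(x+iM) - \Im m_\mu(x+i\eta_0)$. Integrating by parts in $x$, with boundary terms vanishing by the compact support of $f$ in $[-L,L]$, transforms $\int df(x)\int_{\eta_0}^M \Re m_\mu(x+i\eta)\,d\eta$ into $\int f(x)[\Im m_\mu(x+i\eta_0) - \Im m_\mu(x+iM)]\, dx$. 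Combining with the explicit $M$-level term in the statement yields the clean identity
\[\tfrac{1}{2\pi}\iint_{I_{\eta_0}^M}m_\mu \, d\eta\, df(x) + \tfrac{1}{\pi}\int_{-L}^L f(x)\Im m_\mu(x+iM)\, dx = \tfrac{1}{\pi}\int_{-L}^L f(x)\Im m_\mu(x+i\eta_0)\, dx,\]
and the right hand side is precisely the Poisson integral $\int (P_{\eta_0}\ast f)(\lambda)\, \mu(d\lambda)$, where $P_{\eta_0}(u) = \eta_0/(\pi(u^2+\eta_0^2))$. The problem thus reduces to estimating the Stieltjes--Perron regularization error $\int (f - P_{\eta_0}\ast f)\, d\mu$.

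For this error I would compute the antiderivative of the Poisson kernel explicitly to obtain the identity $f(\lambda) - (P_{\eta_0}\ast f)(\lambda) = \tfrac{1}{\pi}\int \arctan(\eta_0/(\lambda-t))\, df(t)$, which is valid since $f$ is compactly supported. Integrating this against $\mu$, applying Fubini, and using the elementary pointwise bound $|\arctan(\eta_0/u)| \le \pi\eta_0/(2|u - i\eta_0|)$ bounds the error by $\tfrac{\eta_0}{2}\int |df(t)|\int \mu(d\lambda)/|\lambda-t-i\eta_0|$, which is of order $\eta_0\|m_\mu(\cdot+i\eta_0)\|_{L^1(|df|)}$. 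The comparison of the absolutized Cauchy integral $\int \mu(d\lambda)/|\lambda-t-i\eta_0|$ with $|m_\mu(t+i\eta_0)|$ is the most delicate step, and relies on the positivity of $\mu$ together with the elementary control $\mu(|\lambda-t|<\eta_0) \le 2\eta_0\Im m_\mu(t+i\eta_0)$.

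The second form of \eqref{eq:int_pleijel} follows by bounding the $M$-level term directly: since $\mu$ is supported in $[-K,K]$, for $|x|\le L$ and $M$ large one has $\Im m_\mu(x+iM) = \int M\,\mu(d\lambda)/((\lambda-x)^2+M^2) \le 1/M$, whence the corresponding term is at most $\|f\|_1/(\pi M)$. All the formal manipulations above (Cauchy--Riemann exchange, Fubini, integration by parts) are justified by absolute integrability of the Poisson kernel and the compact support of $f$; the main technical content lies in calibrating the constant in the Stieltjes--Perron regularization estimate.
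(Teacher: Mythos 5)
Your reduction to the Poisson--Cauchy regularization identity is correct and is, up to packaging, the ``direct'' proof that the paper itself alludes to after Lemma~\ref{Stokes} (the authors instead invoke the contour integral from Pleijel's paper, while your folding by $m_\mu(\bar z)=\overline{m_\mu(z)}$, the Cauchy--Riemann step, and the integration by parts against $\diff f$ reproduce exactly the same identity from scratch). The exact formula
\[
f(\lambda)-(P_{\eta_0}\ast f)(\lambda)=\frac{1}{\pi}\int\arctan\!\Big(\frac{\eta_0}{\lambda-t}\Big)\diff f(t)
\]
is also correct for compactly supported $f\in\BV$.

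The gap is in the final estimation. The comparison you call the ``most delicate step,'' namely
\[
\int\frac{\mu(\diff\lambda)}{\abs{\lambda-t-i\eta_0}}\;\lesssim\;\abs{m_\mu(t+i\eta_0)},
\]
is \emph{false} in general, and it cannot be rescued by the local mass bound $\mu(\abs{\lambda-t}<\eta_0)\le 2\eta_0\Im m_\mu(t+i\eta_0)$ together with positivity. Take $\mu=\tfrac12\delta_{t-d}+\tfrac12\delta_{t+d}$ with $d\gg\eta_0$: the left side is of order $1/d$, while $\Re m_\mu(t+i\eta_0)=0$ by symmetry and $\Im m_\mu(t+i\eta_0)\approx\eta_0/d^2$, so $\abs{m_\mu(t+i\eta_0)}\approx \eta_0/d^2\ll 1/d$. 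The point is that replacing $\arctan(\eta_0/u)$ by the bound $\pi\eta_0/(2\abs{u-i\eta_0})$ throws away the sign of $\arctan$, and that oddness is exactly what produces cancellation in $\Re m_\mu$.

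The fix keeps the sign. Instead of absolutizing, compare $\arctan(\eta_0/u)$ with $\eta_0\,\Re\frac{1}{u-i\eta_0}=\frac{\eta_0 u}{u^2+\eta_0^2}$; one checks
\[
\Big|\arctan\!\Big(\frac{\eta_0}{u}\Big)-\frac{\eta_0 u}{u^2+\eta_0^2}\Big|\le \frac{\pi}{2}\cdot\frac{\eta_0^2}{u^2+\eta_0^2}=\frac{\pi}{2}\,\eta_0\,\Im\frac{1}{u-i\eta_0},
\]
both sides being even in $u$ with equality at $u=0$ and the left side vanishing faster as $u\to\infty$. Integrating over $\lambda$ against $\mu$ (with $u=\lambda-t$) then gives
\[
\int\arctan\!\Big(\frac{\eta_0}{\lambda-t}\Big)\mu(\diff\lambda)=\eta_0\,\Re m_\mu(t+i\eta_0)+\landauO{\eta_0\,\Im m_\mu(t+i\eta_0)}=\landauO{\eta_0\abs{m_\mu(t+i\eta_0)}},
\]
and integrating against $\abs{\diff f}$ recovers the claimed error $\landauO{\eta_0\norm{m_\mu(\cdot+i\eta_0)}_{L^1(\abs{\diff f})}}$. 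This is also precisely what Pleijel's expansion \eqref{pleijel} encodes: the explicit term $\frac{\eta}{\pi}\Re m_\mu(z_0)$ plus the error $\landauO{\eta\,\Im m_\mu(z_0)}$. Your derivation of the $M$-level bound $\abs{\Im m_\mu(x+iM)}\le 1/M$ and the reduction to the second display of \eqref{eq:int_pleijel} is fine.
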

Before going into the proof, we present a special case of Proposition
\ref{prop_int_pleijel}. If $f=\1_{[x,x']}$, then \eqref{eq:int_pleijel}
can be written as the path integral
\begin{align}
\mu([x,x']) &= \frac{1}{2\pi i} \int_{\gamma(x,x')} m_\mu(z)\diff z
+\landauO{\eta_0 [\abs{m_\mu(x+i\eta_0)}+\abs{m_\mu(x'+i\eta_0)}]}
, \label{pleijel_int}
\end{align}
where $\gamma(x,x')$ is the chain indicated in Figure \eqref
{gammaCurve}. We also want to remark that for our purposes \eqref
{eq:int_pleijel} is favorable over the Helffer-Sj\"{o}strand
representation, as used in \cite{MR3805203}, since it
requires considerably less regularity on $f$.

\begin{proof}[Proof of Proposition \ref{prop_int_pleijel}]
From \cite[Eq.~(5)]{MR167751} we know that
\begin{align}\label{pleijel}\mu([-K,x))=\frac{1}{2\pi i}\int_{L(x)}
m_\mu(z)\diff z+\frac{\eta_0}{\pi}\Re m_\mu(z_0) + \landauO{\eta_0 \Im
m_\mu(z_0)},
\end{align}
where $L(x)$ is a directed path as indicated in Figure \ref{curve} and
$z_0=x+i\eta_0$, $\eta_0>0$.

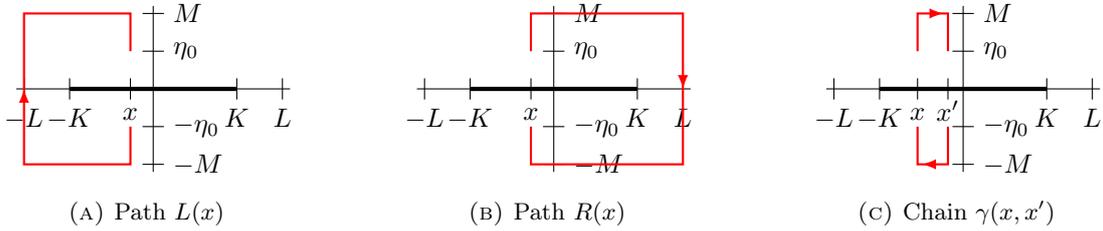
\begin{figure}[hbt]
\centering
\begin{subfigure}[b]{.31\linewidth}
\centering
\begin{tikzpicture}
\draw(0,-1.1) -- (0,1.1); \draw(-1.8,0) -- (1.8,0); %
\draw(-1.1,-4pt) -- (-1.1,4pt) node[pos=0,below] {$-K$};
\draw(1.1,-4pt) -- (1.1,4pt) node[pos=0,below] {$K$};
\draw(-0.3,-4pt) -- (-0.3,4pt) node[pos=0,below] {$x$};
\draw(-1.7,-4pt) -- (-1.7,4pt) node[pos=0,below] {$-L$};
\draw(1.7,-4pt) -- (1.7,4pt) node[pos=0,below] {$L$};
\draw(-4pt,-1.0) -- (4pt,-1.0) node[right] {$-M$};
\draw(-4pt,1.0) -- (4pt,1.0) node[right] {$M$};
\draw(-4pt,-0.5) -- (4pt,-0.5) node[right] {$-\eta_0$};
\draw(-4pt,0.5) -- (4pt,0.5) node[right] {$\eta_0$};
\draw[thick,red,xshift=0pt,decoration={ markings, mark=at position 0.5
with {\arrow{latex}}}, postaction={decorate}]
(-0.3,-0.5) -- (-0.3,-1.0) -- (-1.7,-1.0) -- (-1.7,1.0) -- (-0.3,1.0)
-- (-0.3,0.5);
\draw[ultra thick,black,xshift=0pt, postaction={decorate}]
(-1.1,0) -- (1.1,0);
\end{tikzpicture}
\caption{Path $L(x)$}\label{curve}
\end{subfigure}
\begin{subfigure}[b]{.31\linewidth}
\centering
\begin{tikzpicture}
\draw(0,-1.1) -- (0,1.1); \draw(-1.8,0) -- (1.8,0); %
\draw(-1.1,-4pt) -- (-1.1,4pt) node[pos=0,below] {$-K$};
\draw(1.1,-4pt) -- (1.1,4pt) node[pos=0,below] {$K$};
\draw(-0.3,-4pt) -- (-0.3,4pt) node[pos=0,below] {$x$};
\draw(-1.7,-4pt) -- (-1.7,4pt) node[pos=0,below] {$-L$};
\draw(1.7,-4pt) -- (1.7,4pt) node[pos=0,below] {$L$};
\draw(-4pt,-1.0) -- (4pt,-1.0) node[right] {$-M$};
\draw(-4pt,1.0) -- (4pt,1.0) node[right] {$M$};
\draw(-4pt,-0.5) -- (4pt,-0.5) node[right] {$-\eta_0$};
\draw(-4pt,0.5) -- (4pt,0.5) node[right] {$\eta_0$};
\draw[thick,red,xshift=0pt,decoration={ markings, mark=at position 0.5
with {\arrow{latex}}}, postaction={decorate}]
(-0.3,0.5) -- (-0.3,1.0) -- (1.7,1.0) -- (1.7,-1.0) -- (-0.3,-1.0) --
(-0.3,-0.5);
\draw[ultra thick,black,xshift=0pt, postaction={decorate}]
(-1.1,0) -- (1.1,0);
\end{tikzpicture}
\caption{Path $R(x)$}\label{Rcurve}
\end{subfigure}
\begin{subfigure}[b]{0.31\linewidth}
\centering
\begin{tikzpicture}
\draw(0,-1.1) -- (0,1.1); \draw(-1.8,0) -- (1.8,0); %
\draw(-1.1,-4pt) -- (-1.1,4pt) node[pos=0,below] {$-K$};
\draw(1.1,-4pt) -- (1.1,4pt) node[pos=0,below] {$K$};
\draw(-0.6,-4pt) -- (-0.6,4pt) node[pos=0,below] {$x$};
\draw(-1.7,-4pt) -- (-1.7,4pt) node[pos=0,below] {$-L$};
\draw(1.7,-4pt) -- (1.7,4pt) node[pos=0,below] {$L$};
\draw(-4pt,-1.0) -- (4pt,-1.0) node[right] {$-M$};
\draw(-4pt,1.0) -- (4pt,1.0) node[right] {$M$};
\draw(-4pt,-0.5) -- (4pt,-0.5) node[right] {$-\eta_0$};
\draw(-4pt,0.5) -- (4pt,0.5) node[right] {$\eta_0$};
\draw(-0.2,-4pt) -- (-0.2,4pt) node[pos=0.4,below] {$x'$};
\draw[thick,red,xshift=0pt,decoration={ markings, mark=at position 0.6
with {\arrow{latex}}}, postaction={decorate}]
(-0.6,0.5) -- (-0.6,1.0) -- (-0.2,1.0) -- (-0.2,0.5);
\draw[thick,red,xshift=0pt,decoration={ markings, mark=at position 0.6
with {\arrow{latex}}}, postaction={decorate}]
(-0.2,-0.5) -- (-0.2,-1.0) -- (-0.6,-1.0) -- (-0.6,-0.5);
\draw[ultra thick,black,xshift=0pt, postaction={decorate}]
(-1.1,0) -- (1.1,0);
\end{tikzpicture}
\caption{Chain $\gamma(x,x')$}\label{gammaCurve}
\end{subfigure}
\caption{Integration paths}
\end{figure}

By the definition of the Lebesgue--Stieltjes integral for functions of
bounded variation we have that
\[
\int f(\lambda)\,\mu(\diff\lambda)=\int_{-L}^L\left(\int\1(\lambda
\geq x)\,\mu(\diff\lambda)\right) \diff f(x) =\int_{-L}^L \mu([x,K])
\diff f(x).
\]
By virtue of \eqref{pleijel} we can write
\[
\int f(\lambda)\,\mu(\diff\lambda)= \frac{1}{\pi}\int_{-L}^L\left(\frac
{1}{2i}\int_{R(x)} m_\mu(z)\diff z \right)\diff f(x)+\landauO{\eta
_0\norm{m_\mu(\cdot+i\eta_0)}_{L^1(\abs{\diff f})}},
\]
where $R(x)$ is the path indicated in Figure \ref{Rcurve} and $\abs
{\diff f}$ indicates the total variation measure of $\diff f$.
We then write out the inner integral as
\begin{align*}
\frac{1}{2 i}\int_{R(x)}m_\mu(z)\diff z = \int_{\eta_0}^M \Re m_\mu(x+i
\eta)\diff\eta+\int_{x}^L \Im m_\mu(y+iM)\diff y -\int_{0}^M \Re m_\mu
(L+i\eta)\diff\eta.
\end{align*}
Since the last term is $x$-independent, it will vanish after
integrating against $\diff f$ since we assumed $f$ to be compactly
supported. For the second term we find
\begin{align*}\int f(\lambda)\,\mu(\diff\lambda)&=\frac{1}{\pi}\int
_{-L}^L \int_{\eta_0}^M \Re m_\mu(x+i\eta)\diff\eta\diff f(x)+\frac
{1}{\pi}\int_{-L}^L f(x)\Im m_\mu(x+iM)\diff x\\ &\qquad\qquad+\landauO
{\eta_0\norm{m_\mu(\cdot+i\eta_0)}_{L^1(\abs{\diff f})}}.\label
{integrated_pleijel}
\end{align*}
Since $\abs{\Im m_\mu(x+iM)}\leq1/M$ we thus have
\begin{align*}\int f(\lambda)\,\mu(\diff\lambda)\,{=}\,\frac{1}{\pi}\int
_{-L}^L \int_{\eta_0}^M \Re m_\mu(x\,{+}\,i\eta)\diff\eta\diff f(x)\,{+}\,\landauO
{\eta_0\norm{m_\mu(\cdot\,{+}\,i\eta_0)}_{L^1(\abs{\diff f})}{+}\,\frac{1}{M}\norm{f}_{1}}
\end{align*}
for any $\eta_0,M>0$. For applications it turns out to be favorable to
get rid of the real part which we can by noting that $2\Re m_\mu
(z)=m_\mu(z)+m_\mu(\overline z)$ and therefore
\begin{align*}\int f(\lambda)\,\mu(\diff\lambda)=\frac{1}{2\pi}\iint
_{I_{\eta_0}^M} m_\mu(x+i\eta)\diff\eta\diff f(x)+\landauO{\eta_0\norm
{m_\mu(\cdot+i\eta_0)}_{L^1(\abs{\diff f})}+\frac{1}{M}\norm{f}_{1}},
\end{align*}
where we recall $I_{\eta_0}^M=[-L,L]\times([-M,M]\setminus[-\eta
_0,\eta_0])$.
\end{proof}

We finally note that a variant of Proposition \ref{prop_int_pleijel}
could also be proven directly without appealing to the contour
integration from \cite{MR167751}. The key computation in that
direction is summarized in the following Lemma which we establish here
for later convenience.
\begin{lemma}\label{Stokes}
Let $f\in\BV([-L,L])$ be compactly supported and let $g$ be a function
which is analytic away from the real axis and satisfies $g(\overline
z)=\overline{g(z)}$. Then for any $\eta_0,M>0$ we have that
\begin{align*}\frac{1}{2\pi}\iint_{I_{\eta_0}^M} g(x+i\eta)\diff\eta
\diff f(x) = \frac{1}{\pi} \int_{-L}^L f(x)\Im g(x+i\eta_0)\diff x +
\landauO{\norm{f}_{1} \max_{x\in[-L,L]}\abs{g(x+iM)}}.
\end{align*}
\end{lemma}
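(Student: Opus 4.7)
The plan is to reduce the two-dimensional integral over $I_{\eta_0}^M$ to a one-dimensional integral via the symmetry $g(\overline z) = \overline{g(z)}$, and then apply Riemann--Stieltjes integration by parts, using analyticity to turn the $\partial_x$-derivative into a $\partial_\eta$-derivative via the Cauchy--Riemann equations so that the $\eta$-integration telescopes.

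First, I would split $I_{\eta_0}^M = [-L,L]\times\bigl([\eta_0,M]\cup[-M,-\eta_0]\bigr)$ and use the change of variables $\eta\mapsto -\eta$ on the second slab together with $g(\overline z)=\overline{g(z)}$ to get
\[
\int_{-M}^{-\eta_0} g(x+i\eta)\diff\eta = \int_{\eta_0}^M \overline{g(x+i\eta)}\diff\eta,
\]
so that
\[
\frac{1}{2\pi}\iint_{I_{\eta_0}^M} g(x+i\eta)\diff\eta\diff f(x) = \frac{1}{\pi}\int_{-L}^L F(x)\diff f(x),\qquad F(x)\defeq\int_{\eta_0}^M \Re g(x+i\eta)\diff\eta.
\]
Next, since $g$ is analytic away from the real axis and $\eta\ge \eta_0>0$ throughout, the Cauchy--Riemann equations yield $\partial_x \Re g(x+i\eta) = \partial_\eta \Im g(x+i\eta)$, hence
\[
F'(x) = \int_{\eta_0}^M \partial_\eta \Im g(x+i\eta)\diff\eta = \Im g(x+iM) - \Im g(x+i\eta_0).
\]

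Now I would apply the Riemann--Stieltjes integration-by-parts formula, valid since $F\in C^1([-L,L])$ and $f\in\BV([-L,L])$:
\[
\int_{-L}^L F(x)\diff f(x) = \bigl[F(x)f(x)\bigr]_{-L}^L - \int_{-L}^L f(x)F'(x)\diff x.
\]
The boundary term vanishes because $f$ is compactly supported in $[-L,L]$, so $f(\pm L)=0$. Substituting the expression for $F'$ gives
\[
\frac{1}{\pi}\int_{-L}^L F(x)\diff f(x) = \frac{1}{\pi}\int_{-L}^L f(x)\Im g(x+i\eta_0)\diff x - \frac{1}{\pi}\int_{-L}^L f(x)\Im g(x+iM)\diff x.
\]
The second term is bounded in absolute value by $\pi^{-1}\norm{f}_1\max_{x\in[-L,L]}\abs{g(x+iM)}$, which is exactly the error term advertised.

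The only subtle point is the justification of Riemann--Stieltjes integration by parts for a $BV$ integrator against a $C^1$ integrand, but this is a classical fact, and $F\in C^1$ is immediate since the integrand $g(x+i\eta)$ is smooth on the strip $\{\eta\in[\eta_0,M]\}$ that is bounded away from the real axis. No further complication is expected; in particular, one does not need any regularity of $f$ beyond bounded variation, which is the whole point of this lemma.
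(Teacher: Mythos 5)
Your proof is correct, and it reaches the identity by a slightly different mechanism than the paper. The paper's proof is a one-line application of Stokes'/Green's theorem: it rewrites $\iint_{I_{\eta_0}^M} g(x+i\eta)\diff\eta\diff f(x)$ as the boundary integral $-i\int_{\partial I_{\eta_0}^M} f(x)g(z)\diff z$, whose horizontal segments at heights $\pm\eta_0$ and $\pm M$ combine (using $g(\overline z)=\overline{g(z)}$ and the vanishing of $f$ at $\pm L$) into $2\int f(x)\Im\left[g(x+i\eta_0)-g(x+iM)\right]\diff x$. You instead fold the lower slab onto the upper one via the conjugation symmetry to reduce to $F(x)=\int_{\eta_0}^M\Re g(x+i\eta)\diff\eta$, convert $\partial_x\Re g$ into $\partial_\eta\Im g$ by the Cauchy--Riemann equations so the $\eta$-integral telescopes, and then perform a one-dimensional Riemann--Stieltjes integration by parts against $\diff f$; the two routes are of course the same computation in spirit (Green's theorem is two-dimensional integration by parts), and both produce the same boundary contributions at $\eta_0$ and $M$ with the same error bound. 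What your version buys is that the only place the low regularity of $f$ enters is the classical one-dimensional Stieltjes integration by parts of a $C^1$ function against a $BV$ integrator, so you never have to justify a two-dimensional Stokes formula in which $\diff f$ plays the role of a derivative of a merely $BV$ function; what the paper's version buys is brevity and a formulation (a contour integral of $f(x)g(z)$) that matches the Pleijel path-integral picture used elsewhere in the argument. Your justification of the auxiliary steps (differentiation under the integral sign on the compact strip $[-L,L]\times[\eta_0,M]$, vanishing of the boundary term from the compact support of $f$, and the bound $\abs{\Im g(x+iM)}\le\max_x\abs{g(x+iM)}$) is sound, so there is no gap.
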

Applying Lemma \ref{Stokes} to $g=m_\mu$ yields, modulo an error term,
\[
\frac{1}{2\pi}\iint_{I_{\eta_0}^M} m_\mu(x+i\eta)\diff\eta\diff f( x)
\approx\int\int_{-L}^L f(x)\frac{1}{\pi}\frac{\eta_0}{(\lambda-x)+\eta
_0^2}\diff x \,\mu(\diff\lambda)
\]
and taking the limit $\eta_0\to0$ makes the inner integral tend to
$f(\lambda)$ in $L^1$-sense. In this way we can establish a variant of
Proposition \ref{prop_int_pleijel}, albeit with a weaker error estimate.
\begin{proof}[Proof of Lemma \ref{Stokes}]
This follows from the computation
\begin{align*}
\iint_{I_{\eta_0}^M} g(x+i\eta)\diff\eta\diff f(x) &=-i\int_{\partial
I_{\eta_0}^M} f(x) g(z)\diff z \nonumber= 2\int_{-3}^3 f(x)\Im\left
[g(x+i \eta_0)-g(x+iM)\right]\diff x \nonumber\\ & = 2\int_{-3}^3
f(x)\Im g(x+i\eta_0)\diff x +
\landauO{\norm{f}_{1} \max_{x\in[-3,3]}\abs{g(x+iM)}},
\end{align*}
where the first step follows from Stokes' or Green's Theorem.
\end{proof}

\section{Diagonal entries}
We first prove Theorem \ref{mainThmDiag} about the diagonal entries of
$f(H)$. The spectral measure corresponding to the $(1,1)$-matrix
element, $\rho_N$ defined as
\[
\int f\diff\rho_N = f(H)_{11}
\]
is concentrated in $[-2.5,2.5]$ with overwhelming probability. We can
without loss of generality assume that $f$ is compactly supported in
$[-3,3]$ since smoothly cutting off $f$ outside the spectrum does not
change the result. Applying Proposition \ref{prop_int_pleijel} to $\mu
=\rho_N$ with $K=2.5$, $L=3$, we find that (using $z=x+i\eta$,
$z_0=x+i\eta_0$)
\begin{align}\label{fH11a} f(H)_{11} &= \frac{1}{2\pi}\iint_{I_{\eta
_0}^M} G(z)_{11}\diff\eta\diff f(x)+\stO{\eta_0 \int\abs
{G(z_0)_{11}}\diff f(x) +\frac{1}{M}\norm{f}_{1}}.
\end{align}

To analyse $G(z)_{11}$ we recall the Schur complement formula
\[
G(z)_{11}=\frac{1}{h_{11}-z-\braket{h,\widehat G(z) h}},\quad\text
{where }H = \left(
\begin{matrix}
h_{11}& h^*\\ h & \widehat H
\end{matrix}
\right),\qquad\widehat G(z)\defeq(\widehat H- z)^{-1}.
\]
To study the asymptotic behavior of $G(z)_{11}$ we rely on the local
semicircle law in the averaged form
(see \cite{MR2871147} or \cite[Theorem 2.3]{MR3068390})
applied to the resolvent of the minor
\begin{align}\label{eq:localSC}
\widehat m_N(z)= \frac{1}{N}\Tr\widehat G(z) = m(z)+\stO{\frac{1}{N\abs
{\eta}}},
\end{align}
and its entry-wise form 
\begin{align}\label{eq:localSCentry}
G(z)_{ij}-\delta_{ij}m(z)\prec\frac{1}{\sqrt{N\abs{\eta}}}
\end{align}
which both hold true for all $\abs{\eta}=\abs{\Im z}>\eta_0\gg N^{-1}$.
Here $m$ denotes the Stieltjes transform of the semicircular
distribution $\mu_{sc}$, $m(z)\defeq\int(\lambda-z)^{-1}\, \mu
_{sc}(\diff\lambda)$.

Since by \eqref{eq:localSCentry},
\[
\int\abs{G(x+i\eta_0)_{11}}\diff f(x)=\int\abs{ m(x+i\eta_0)}\diff
f(x)+\stO{\int\abs{\frac{1}{\sqrt{N\eta_0}}}\diff f(x)}\prec\norm
{\diff f}
\]
for $\eta_0\gg1/N$, where $\norm{\diff f}$ is the total variation norm
of the Lebesgue--Stieltjes measure $\diff f$, we can write \eqref
{fH11a} as
\[
f(H)_{11} = \frac{1}{2\pi}\iint_{I_{\eta_0}^M} G(x+i\eta)_{11}\diff\eta
\diff f(x)+\stO{\eta_0 \norm{\diff f} +M^{-1} \norm{f}_{1}}.
\]

In order to separate the leading order contribution from the
fluctuation, we set
\[
\Phi_N(z)= G(z)_{11}=\frac{1}{h_{11}-z-\braket{h,\widehat G (z)
h}},\qquad\widehat\Phi_N(z)=\frac{1}{-z-\widehat m_N(z)},
\]
where $\widehat m_N(z)=\frac{1}{N}\Tr\widehat G(z)$ and observe that
\begin{align}\label{HatPhiEst}\widehat\Phi_N(z)= \frac
{1}{-z-m(z)}+\frac{\stO{m(z)-\widehat m_N(z)}}{-z-m(z)}=m(z)+\stO{\frac
{1}{N\abs\eta}}
\end{align}
and by expanding both terms around $[-z-m(z)]^{-1}=m(z)$,
\begin{align}\label{PhiEst}\Phi_{N}(z)-\widehat{\Phi}_N(z) =m(z)^2 \left
[\braket{h,\widehat G(z)h}-\widehat m_N(z)-h_{11}\right] + \stO{\frac
{1}{N\abs{\eta}}}.
\end{align}
Thus $\widehat\Phi_N$ describes the leading order behavior, which is
very close to a deterministic quantity, and the leading fluctuation is
solely described by $\Phi_N-\widehat\Phi_N$.
We then can write
\begin{align*}
f(H)_{11}= \Lambda_{f}^{(N)}+\frac{\Delta_{f}^{(N)}}{\sqrt{N}}+\stO{\eta
_0 \norm{\diff f}+\frac{1}{M}\norm{f}_{1}},
\end{align*}
where
\[
\Lambda_{f}^{(N)}\defeq\frac{1}{2\pi}\iint_{I_{\eta_0}^M} \widehat\Phi
_N(z)\diff\eta\diff f(x)\quad\text{and}\quad\Delta_{f}^{(N)}\defeq
\frac{1}{2\pi}\iint_{I_{\eta_0}^M}\sqrt N [\Phi_N-\widehat\Phi
_N(z)]\diff\eta\diff f(x).
\]
The reason for the normalization will become apparent later since in
this way $\Delta_{f}^{(N)}$ is an object of order $1$.

For the leading order term we use \eqref{HatPhiEst} and Proposition \ref
{prop_int_pleijel} to compute
\begin{align*}\Lambda_{f}^{(N)}&= \frac{1}{2\pi}\int_{I_{\eta_0}^M}
m(z)\diff\eta\diff f(x) + \stO{\norm{\diff f}\int_{\eta_0}^M \frac
{1}{N \eta}\diff\eta}\\&=\int f(x) \,\mu_{sc}(\diff x)+\stO{\left[\frac
{\abs{\log M}+\abs{\log\eta_0}}{N}+\eta_0\right]\norm{\diff f}+\frac
{1}{M}\norm{f}_{1}}.
\end{align*}

For the fluctuation we use \eqref{PhiEst} to compute
\begin{align}\nonumber
\Delta_f^{(N)} &= \frac{1}{2\pi}\int_{I_{\eta_0}^M} m(z)^2 \sqrt{N}\left
[\braket{h,\widehat G(z)h}-\widehat m_N(z)-h_{11}\right]\diff\eta\diff
f(x) \\
&\quad\ + \stO{\frac{\abs{\log M}+\abs{\log\eta}}{\sqrt N}\norm{\diff f}}\notag
\\\label{DeltafN}
&=\widehat\Delta_f^{(N)}-\xi_{11}\frac{1}{2\pi}\int_{I_{\eta_0}^M}
m(z)^2\diff\eta\diff f(x) + \stO{\frac{\abs{\log M}+\abs{\log\eta
}}{\sqrt N}\norm{\diff f}} \\
&=\widehat\Delta_f^{(N)}+\xi_{11}\int f(x)x\,\mu_{sc}(\diff x) + \stO
{\frac{\abs{\log M}+\abs{\log\eta}}{\sqrt N}\norm{\diff f}+\eta_0+\frac
{1}{M^2}\norm{f}_{1}} ,\nonumber
\end{align}
where the last step followed from Lemma \ref{Stokes} and
\[
\xi_{11}\,{=}\,\sqrt N h_{11},\quad\widehat\Delta_f^{(N)} \defeq\frac{1}{2\pi
} \int_{I_{\eta_0}^M} m(z)^2 X(z)\diff\eta\diff f(x),\quad X(z)\,{=}\, \sqrt
N\left[\braket{h,\widehat G(z)h}{-}\,\widehat m_N(z)\right].
\]

We now concentrate on the computation of $\E\left(\widehat\Delta
_f^{(N)}\right)^2$. We state the main estimate of $\E X(z)X(z')$ as a lemma.
\begin{lemma}\label{lemma:EXX}
Under the assumptions of Theorem \ref{mainThmDiag} it holds that
\begin{align}
\E X(z)X(z') &= \frac{m(z)^2m(z')^2}{1-m(z)m(z')}+\frac{\sigma
_2^3m(z)^2m(z')^2}{1-\sigma_2m(z)m(z')}+(\sigma_4-1)m(z)m(z') + \stO
{\frac{\Psi}{\sqrt N \Phi}},\label{eq:EX2}
\end{align}
where
\begin{align*}\Psi&\defeq\frac{1}{\sqrt{\abs{\eta\eta'}}}\left(\frac
{1}{\sqrt{\abs{\eta}}}+\frac{1}{\sqrt{\abs{\eta'}}}+\frac{1}{\sqrt{N\abs
{\eta\eta'}}}\right)\\ \Phi&\defeq\1_{\abs{x},\abs{x'}\leq2}\left(\abs
{\eta}+\abs{\eta'}+\abs{x-x'}^2\right)+\left[(\abs{x}-2)_+ +(\abs
{x'}-2)_+ \right]
\end{align*}
and $z=x+i\eta$, $z'=x'+i\eta'$.
\end{lemma}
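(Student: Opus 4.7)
The plan is to condition on the minor $\widehat H$, compute the second moment $\E_h[X(z) X(z')]$ over $h$ alone (which is independent of $\widehat H$), and then take the outer expectation using the local semicircle laws \eqref{eq:localSC} and \eqref{eq:localSCentry}. Since $X(z) = \braket{h,\widehat G(z) h}-\widehat m_N(z)$ is a centered quadratic form in the independent entries of $h$, a direct Wick-type expansion of $\E_h[\bar h_i h_j \bar h_k h_l]$ produces three pairings. The $\delta_{ij}\delta_{kl}$ pairing cancels exactly against the subtracted mean. The $\delta_{il}\delta_{jk}$ pairing contributes $\Tr[\widehat G(z)\widehat G(z')]/N^2$; the $\delta_{ik}\delta_{jl}$ pairing contributes $\sigma_2^2 \Tr[\widehat G(z)\widehat G(z')^T]/N^2$, which is nontrivial whenever $\E h_{ij}^2\neq 0$; and the coincidence correction when $i=j=k=l$ produces a diagonal term proportional to $(\sigma_4-\mathrm{const})\sum_i \widehat G(z)_{ii}\widehat G(z')_{ii}/N^2$, incorporating the fourth moment.

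For the first trace the resolvent identity $\widehat G(z)\widehat G(z') = [\widehat G(z)-\widehat G(z')]/(z-z')$ collapses the sum to $N(\widehat m_N(z)-\widehat m_N(z'))/(z-z')$, and the averaged local law replaces $\widehat m_N$ by $m$ at the cost of errors of size $1/(N|\eta|)+1/(N|\eta'|)$. The key algebraic identity $(m(z)-m(z'))/(z-z') = m(z)m(z')/(1-m(z)m(z'))$, which is a direct consequence of the self-consistent equation $m^2+zm+1=0$, then generates the first main term. The diagonal sum is handled by the entrywise local law $\widehat G(z)_{ii} = m(z)+\stO{1/\sqrt{N|\eta|}}$, producing the contribution proportional to $(\sigma_4-1)m(z)m(z')$ after matching coefficients.

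The second trace $\Tr[\widehat G(z)\widehat G(z')^T]$ is more delicate in the complex Hermitian case because $\widehat G(z')^T\neq \widehat G(z')$, so the resolvent identity does not collapse the sum. We would treat it by iterating a Schur-type expansion, where each iteration extracts a factor of $\sigma_2 m(z) m(z')$, producing a geometric series that resums to $\sigma_2^3 m(z)^2 m(z')^2/(1-\sigma_2 m(z)m(z'))$. This mirrors the cumulant expansion familiar from CLT proofs for linear eigenvalue statistics, and the powers of $m$ in the numerator encode the iterated Ward-type contractions.

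The main obstacle is to control the error at the claimed precision $\stO{\Psi/(\sqrt N \Phi)}$. Two improvements beyond the pointwise local law must be exploited: first, the $|x-x'|^2$ factor in $\Phi$ reflects Lipschitz cancellation in $(\widehat m_N-m)(z)-(\widehat m_N-m)(z')$, which avoids the $1/|z-z'|$ blow-up of the naive estimate when $z\approx z'$; second, the edge factor $(\abs{x}-2)_+$ reflects decay of $\Im m$ outside the semicircle support, which propagates to the resolvent entries. Both improvements require two-resolvent estimates in the style of \cite{2016arXiv160805163E}, together with a careful expansion of $\widehat m_N - m$ using the underlying self-consistent equation. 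Assembling these sharp error bounds and tracking them uniformly in the spectral parameters is expected to be the most technically involved step.
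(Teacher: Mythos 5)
Your starting point is correct and matches the paper's: condition on $\widehat H$, Wick-expand the centered quadratic form $X(z)X(z')$ to get three pairing contributions (the $\delta_{ab}\delta_{cd}$ pairing cancelling the subtracted trace, the $\delta_{ad}\delta_{bc}$ pairing giving $\frac{1}{N}\sum_{i\neq j}\widehat G_{ij}\widehat G'_{ji}$, the $\delta_{ac}\delta_{bd}$ pairing giving $\frac{\sigma_2^2}{N}\sum_{i\neq j}\widehat G_{ij}\widehat G'_{ij}$), with the coincident-index corrections producing the $\frac{\sigma_4-1}{N}\sum_i\widehat G_{ii}\widehat G'_{ii}$ term. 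This is exactly Eq.~(35) of \cite{2016arXiv160805163E}, which the paper cites rather than re-derives. Your treatment of the diagonal term via the entrywise local law also matches the paper.

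Where you diverge, and where a genuine gap appears, is in the treatment of $\frac{1}{N}\sum_{i\neq j}\widehat G_{ij}\widehat G'_{ji}$. You propose the resolvent identity to collapse the full trace to $(\widehat m_N(z)-\widehat m_N(z'))/(z-z')$, then use the algebraic identity $(m(z)-m(z'))/(z-z')=m(z)m(z')/(1-m(z)m(z'))$ and argue that a Lipschitz cancellation in $(\widehat m_N-m)(z)-(\widehat m_N-m)(z')$ tames the $1/|z-z'|$ singularity. The algebraic identity is correct, but the Lipschitz step is exactly the hard point, and your proposal does not supply it: the averaged local law gives $|\widehat m_N-m|\prec 1/(N|\eta|)$ pointwise, and after dividing by $|z-z'|$ the naive error $\stO{1/(N|\eta||z-z'|)}$ is far larger than the claimed $\stO{\Psi/(\sqrt N\Phi)}$ when $|x-x'|$ is small but nonzero. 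The paper avoids this issue entirely by invoking the \emph{self-consistent equations} of Lemma~9 of \cite{2016arXiv160805163E}, which produce $[1-m(z)m(z')]\frac{1}{N}\sum_{i\neq j}\widehat G_{ij}\widehat G'_{ji}=m(z)^2m(z')^2+\stO{\Psi/\sqrt N}$ with no division by $z-z'$ anywhere; the small denominator is instead $1-m(z)m(z')$, which is then estimated from below. Your route for the $\sigma_2$ trace (iterating a Schur-type expansion into a geometric series) is structurally the same as the paper's self-consistent equation for $\frac{1}{N}\sum_{i\neq j}\widehat G_{ij}\widehat G'_{ij}$, so you might as well use the same mechanism for the first trace too.

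The other piece the paper actually \emph{proves} within this lemma — and which you gesture at but do not establish — is the lower bound $C|1-m(z)m(z')|\geq\Phi$ on the compact region $[-3,3]\times[-iM,iM]$. The paper splits into three regimes: $|m(z)|\leq 1-c|\eta|$ giving the $|\eta|+|\eta'|$ part, $|m(z)|\leq 1-c'(|x|-2)_+$ giving the edge part, and an explicit computation of $1-\Re[m(z)m(z')]$ in the bulk giving the $|x-x'|^2$ part (with a sign choice $\pm$ depending on $\sgn(\eta\eta')$). Without this bound, the division step at the end does not produce the stated $\Phi$ in the denominator. In short: the skeleton of your argument is right and the Wick expansion and algebraic identity are correct, but the two load-bearing estimates — the self-consistent equations with error $\stO{\Psi/\sqrt N}$ (delegated to \cite{2016arXiv160805163E}) and the pointwise lower bound on $|1-mm'|$ (proved in the paper) — are the real content, and your proposal leaves both unfilled, with the resolvent-identity detour likely making the first one harder rather than easier.
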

We remark that in the $\abs{x-x'}^2$ term in $\Phi$ could be replaced
by $\abs{x-x'}$ but we will not need this stronger bound here.
\begin{proof}[Proof of Lemma \ref{lemma:EXX}]
From (35) in \cite{MR3805203} we know that
\begin{align}\label{eq:EXX}\E\left[ X(z)X(z') | \widehat H\right] =
\frac{1}{N}\sum_{i\not= j}\left(\widehat{G}_{ij} \widehat
{G}'_{ji}+\sigma_2^2 \widehat{G}_{ij} \widehat{G}'_{ji} \right) + \frac
{\sigma_4-1}{N}\sum_{i}\widehat{G}_{ii}\widehat{G}_{ii}'
\end{align}
where, $\widehat{G}_{ij}\defeq\widehat G(z)_{ij}$, $\widehat
{G}_{ij}'\defeq\widehat G(z')_{ij}$. The last term we directly
estimate as
\begin{align}\label{GG}\frac{\sigma_4-1}{N}\sum_i \widehat
{G}_{ii}\widehat{G}'_{ii}=(\sigma_4-1)m(z)m(z')+\stO{\frac{1}{\sqrt
{N\abs{\eta}}}+\frac{1}{\sqrt{N\abs{\eta'}}}+\frac{1}{N\sqrt{\abs{\eta
\eta'}}}}.
\end{align}
Furthermore, in Lemma 9 of \cite{MR3805203} self-consistent
equations for the first two terms on the rhs.~of \eqref{eq:EXX} were
derived. We recall that
\begin{align*} [1-m(z)m(z')]\frac{1}{N}\sum_{i\not= j} \widehat
{G}_{ij}\widehat{G}'_{ji} &= m(z)^2 m(z')^2 + \stO{\frac{\Psi}{\sqrt
{N}}},\\ [1-\sigma_2 m(z)m(z')]\frac{1}{N}\sum_{i\not= j} \widehat
{G}_{ij}\widehat{G}'_{ij} &= \sigma_2 m(z)^2 m(z')^2 + \stO{\frac{\Psi
}{\sqrt{N}}},
\end{align*}
Using the straightforward inequality $\abs{m(z)}\leq1- c\abs{\eta}$,
which holds for some small $c>0$ and $z$ in the compact region
$[-10,10]\times[-i,i]$, we find
\[
\abs{1-m(z)m(z')}\geq c(\abs{\eta}+\abs{\eta'}).
\]
Since $\abs{m}$ decays outside the spectrum $[-2,2]$ we have that $\abs
{m(z)}\leq1 - c'(\abs{x}-2)_+$
for $\abs{z}\le10$,
and therefore
\[
\abs{1-m(z)m(z')}\geq c'(\abs{x}-2)_++c'(\abs{x'}-2)_+.
\]
Moreover, in the remaining regime where both $\abs{\eta},\abs{\eta'}\ll
1$ and $\abs{x},\abs{x'}\leq2$, it holds that
\begin{align*}
\abs{1-m(z)m(z')}&\geq1-\Re[m(z)m(z')] = 1 - (\Re m(z))(\Re m(z'))+
(\Im m(z))(\Im m(z')) \\&\geq c''\bigg(1 - \frac{x x'}{4} \pm\frac
{\sqrt{4-x^2}\sqrt{4-x'^2}}{4}\bigg)\geq c''(x-x')^2,
\end{align*}
where the $\pm$ depends on the signs of $\eta,\eta'$ and we allow for
the constant $c''$ to change in the last inequality.
This estimate follows from the explicit formula for $m(z)$.
Putting these inequalities together, we therefore find a constant
$C>0$ such that in the compact region $[-3,3]\times[-iM,iM]$ it holds
that $C\abs{1-m(z)m(z')}\geq\Phi$ ,
from which we obtain
\begin{align}\label{trGG}
\frac{1}{N}\sum_{i\not=j}\widehat{G}_{ij}\widehat{G}'_{ji}& =\frac
{m(z)^2m(z')^2}{1-m(z)m(z')}+ \stO{\frac{\Psi}{\sqrt N \Phi}},\\
\frac{1}{N}\sum_{i\not=j}\widehat{G}_{ij}\widehat{G}'_{ij}&=\frac{\sigma
_2m(z)^2m(z')^2 }{1-\sigma_2m(z)m(z')}+\stO{\frac{\Psi}{\sqrt N \Phi
}}.\nonumber
\end{align}
Now \eqref{eq:EX2} follows from combining \eqref{eq:EXX}, \eqref{GG}
and \eqref{trGG}.
\end{proof}

Using Lemma \ref{lemma:EXX} we then compute
\begin{align*}\E\left(\widehat\Delta_f^{(N)}\right)^2&=\frac{1}{(2\pi
)^2}\iiiint_{I_{\eta_0}^M} m(z)^2m(z')^2 \E X(z) X(z')\diff\boldsymbol
{\eta}\diff f(\boldsymbol{x})\\
&= \frac{1}{(2\pi)^2}\iiiint_{I_{\eta_0}^M}\Big[\frac
{m(z)^4m(z')^4}{1-m(z)m(z')}+\frac{\sigma_2^3m(z)^4m(z')^4}{1-\sigma
_2m(z)m(z')}\\
&\qquad\qquad+(\sigma_4-1)m(z)^3m(z')^3\Big]\diff\boldsymbol{\eta
}\diff f(\boldsymbol{x})+\landauO{\iiiint_{I_{\eta_0}^M}\frac{\Psi
}{\sqrt{N}\Phi}\diff\boldsymbol{\eta}\diff f(\boldsymbol{x})},
\end{align*}
where $\diff\boldsymbol{\eta}=\diff\eta\diff\eta'$ and $\diff
f(\boldsymbol{x})=\diff f(x)\diff f(x')$.
To estimate the error term we have to compute
\begin{align*}
\iint_{-2}^2 \iint_{\eta_0}^M \frac{1}{\eta+\eta'+\abs{x-x'}^2}\frac
{1}{\sqrt{\eta\eta'}}\left(\frac{1}{\sqrt{\eta}}+\frac{1}{\sqrt{\eta
'}}+\frac{1}{\sqrt{N \eta\eta'}}\right)\diff\boldsymbol{\eta}\diff
f(\boldsymbol{x})
\end{align*}
and readily check that
\[
\iiiint_{I_{\eta_0}^M}\frac{\Psi}{\sqrt{N}\Phi}\diff\boldsymbol{\eta
}\diff f(\boldsymbol{x})\prec
\begin{cases}
(\abs{\log M}+\abs{\log\eta_0})/\sqrt N &\text{if }f'\text{ is
bounded},\\
(\abs{\log M}+\abs{\log\eta_0})/\sqrt{N\eta_0} &\text{else}.
\end{cases}
\]

By using Lemma \ref{Stokes} and organizing the contributions from the
boundary terms at $\eta_0$ and $-\eta_0$,
we find that the leading order of $\E(\widehat\Delta_f^{(N)})^2$
becomes
\begin{align}\nonumber&
\frac{1}{2\pi^2} \Re\iint_{-3}^3 f(x)f(x')\Bigg(\left[\frac
{m(z_0)^4m(\overline{z_0'})^4}{1-m(z_0)m(\overline{z_0'})}+\frac{\sigma
_2^3m(z_0)^4m(\overline{z_0'})^4}{1-\sigma_2 m(z_0)m(\overline
{z_0'})}+(\sigma_4-1)m( z_0)^3m(\overline{z_0'})^3\right]\\ &-\left
[\frac{m(z_0)^4m(z_0')^4}{1-m(z_0)m(z_0')}+\frac{\sigma
_2^3m(z_0)^4m(z_0')^4}{1-\sigma_2 m(z_0)m(z_0')}+(\sigma_4-1)m( z_0
)^3m(z_0')^3\right]\Bigg)\diff\boldsymbol x +\stO{\frac{\norm
{f}_{1}}{M^3}},\label{Eint3a}
\end{align}
where $z_0=x+i\eta_0$ and $z_0'=x'+i\eta_0$. Since
\[
\frac{a^4}{1-a}=\frac{a}{1-a}-a-a^2-a^3
\]
and for any fixed $k\in\N$
\begin{align*}
&\frac{1}{2\pi^2}\Re\iint_{-3}^3 f(x)f(x') \left[m(z_0)^k m(\overline
{z_0'})^k - m(z_0)^k m(z_0')^k\right] \diff\boldsymbol x\\
&\quad = \left(\frac
{1}{\pi}\Im\int_{-2}^2 f(x) m(x)^k\diff x\right)^2+\stO{\eta_0}
\end{align*}
we can conclude that \eqref{Eint3a} becomes
\begin{align}\nonumber
&\frac{1}{2\pi^2} \Re\iint_{-3}^3 f(x)f(x')\Bigg(\frac
{m(z_0)m(\overline{z_0'})}{1-m(z_0)m(\overline{z_0'})} - \frac
{m(z_0)m(z_0')}{1-m(z_0)m(z_0')}\Bigg)\diff\boldsymbol x \\\nonumber&
\quad+ \frac{1}{2\pi^2} \Re\iint_{-3}^3 f(x)f(x') \Bigg(\frac
{m(z_0)m(\overline{z_0'})}{1-\sigma_2 m(z_0)m(\overline{z_0'})} - \frac
{m(z_0)m(z_0')}{1-\sigma_2 m(z_0)m(z_0')}\Bigg)\diff\boldsymbol x\\
\nonumber&\quad- 2\left(\frac{1}{\pi}\Im\int_\R f(x) m(x)\diff x\right
)^2 - (1+\sigma_2)\left(\frac{1}{\pi}\Im\int_\R f(x) m(x)^2\diff x\right
)^2 \\ &\quad+ (\sigma_4-2-\sigma_2^2) \left(\frac{1}{\pi}\Im\int_\R
f(x) m(x)^3\diff x\right)^2 + \landauO{\frac{\norm{f}_{L^1}}{M^3}+\eta
_0}.\label{Eint2bb}
\end{align}
The first term of \eqref{Eint2bb} was already computed on page 17 of
\cite{MR3805203}. The computation of the second term is very
similar to the first one and the remaining terms are routine
calculations. We arrive at
\begin{align*}
\E\left(\widehat\Delta_{f}^{(N)}\right)^2 &= V_{f,1}+V_{f,1}^{(\sigma
_2)}- 2V_{f,2} - (1+\sigma_2)V_{f,3} + (\sigma_4-2-\sigma_2^2)V_{f,4}
\\ &\qquad\qquad\qquad+ \landauO{\eta_0+\frac{\norm{f}_{1}}{M^3}+\frac
{\abs{\log M}+\abs{\log\eta_0}}{\sqrt{N\eta_0}}\norm{\diff f}}
\end{align*}
in the general case and
\begin{align*}
\E\left(\widehat\Delta_{f}^{(N)}\right)^2 &= V_{f,1}+V_{f,1}^{(\sigma
_2)}- 2V_{f,2} - (1+\sigma_2)V_{f,3} + (\sigma_4-2-\sigma_2^2)V_{f,4}
\\ &\qquad\qquad\qquad+ \landauO{\eta_0+\frac{\norm{f}_{1}}{M^3}+\frac
{\abs{\log M}+\abs{\log\eta_0}}{\sqrt{N}}\norm{f'}_{L^\infty}}
\end{align*}
in the case of $f$ with bounded derivative $f'\in L^\infty([-3,3])$,
where
\begin{align}\nonumber
V_{f,1}& \defeq\int f(x)^2\,\mu_{sc}(\diff x),\,\,\\
V_{f,1}^{(\sigma
_2)}&\defeq\iint\frac{f(x)f(y)(1-\sigma_2^2)}{1-xy\sigma
_2+(x^2+y^2-2)\sigma_2^2-xy\sigma_2^3+\sigma_2^4} \,\mu_{sc}(\diff x)\,
\mu_{sc}(\diff y) \notag\\
V_{f,2}&\defeq\left(\int f(x)\,\mu_{sc}(\diff
x)\right)^2,\,
\, V_{f,3}\defeq\left(\int f(x)x\,\mu_{sc}(\diff x)\right)^2,\,\,\notag\\
V_{f,4}&\defeq\left(\int f(x)(x^2-1)\,\mu_{sc}(\diff x)\right)^2. \label{Vi}
\end{align}
We note that $V_{f,1}^{(\sigma_2)}$ simplifies to
$V_{f,1}^{(1)}=V_{f,1}$ and $V_{f,1}^{(0)}=V_{f,2}$ in the two
important cases $\sigma_2=0,1$.

We now choose $M=N$ and $\eta_0$ depending on the regularity of $f$. In
the general case of $f\in\BV([-3,3])$ it turns out that $\eta
_0=N^{-2/3}$ minimizes the error of $\E\left(\widehat\Delta
_{f}^{(N)}\right)^2$,\vadjust{\eject} whereas for $f$ with bounded derivative, a choice
of $\eta_0=N^{-1+\epsilon}$ for any small $\epsilon>0$ is optimal. Thus
\begin{align}\label{deltafN2moment}
\E\left(\widehat\Delta_{f}^{(N)}\right)^2 = \E\left(\widehat\Delta
_f\right)^2 +
\begin{cases}
\stO{N^{-1/2}} & \text{if }f'\in L^\infty([-3,3]),\\
\stO{N^{-1/6}} & \text{else.}
\end{cases}
\end{align}
where $\widehat\Delta_f$ is a centered Gaussian of variance
\begin{align*}
\E\left(\widehat\Delta_f\right)^2 =V_{f,1}+V_{f,1}^{(\sigma_2)}-
2V_{f,2} - (1+\sigma_2)V_{f,3} + (\sigma_4-2-\sigma_2^2)V_{f,4}.
\end{align*}

For higher moments we recall the following Wick type factorization
Lemma from \cite{MR3805203}.
\begin{lemma}\label{lemma:pairings}
For $k\geq2$ and $z_1,\dots,z_k\in\C$ with $z_l=x_l\pm i\eta_l$ and
$\eta_l>0$ we have that
\begin{align}\label{eq:pairings} \E[X(z_1)\dots X(z_k)]&= \sum_{\pi\in
P_2([k])}\prod_{\{a,b\}\in\pi}\E[X(z_a)X(z_b)]+\stO{\frac{1}{\sqrt
{N\boldsymbol{\eta}}}\sum_{a\not= b}\frac{1}{\sqrt{\eta_a}\Phi_{a,b}}},
\end{align}
where $[k]\defeq\{1,\dots,k\}$, $\boldsymbol\eta=\eta_1\dots\eta_k$,
$P_2(L)$ are the partitions of a set $L$ into subsets of size $2$ and
\[
\Phi_{a,b}\defeq\1_{\abs{x_a},\abs{x_b}\leq2}\left(\abs{\eta_a}+\abs
{\eta_b}+\abs{x_a-x_b}^2\right)+\left[(\abs{x_a}-2)_+ +(\abs{x_b}-2)_+
\right].
\]
\end{lemma}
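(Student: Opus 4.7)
Since the random vector $h$ (the first column of $H$ with $h_{11}$ removed) is independent of $\widehat H$, the natural plan is to condition on $\widehat H$, treat each $X(z_l) = \braket{h,\widehat G(z_l)h} - \widehat m_N(z_l)$ as a centered quadratic form in the independent coordinates $h_2,\dots,h_N$, and then carry out a Wick / cumulant expansion for the product of $k$ such forms. The unconditional statement \eqref{eq:pairings} should then follow by taking $\E[\cdot | \widehat H]$, replacing conditional pair correlations with their deterministic leading order via Lemma \ref{lemma:EXX}, and integrating out $\widehat H$.

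\textbf{Step 1: Conditional cumulant expansion.} Write
\[
X(z_l) = \sum_{i,j\ge 2}\overline{h_i}\,\widehat G_{ij}(z_l)\,h_j - \frac{1}{N}\Tr \widehat G(z_l),
\]
so that, conditionally on $\widehat H$, the product $X(z_1)\cdots X(z_k)$ is a polynomial of degree $2k$ in the independent entries $h_i$ whose conditional mean has been subtracted off on each factor. Expanding $\E[\,\cdot\,|\widehat H]$ by cumulants of the $h_i$'s and organizing by the coincidence pattern of the index tuples produces:
\begin{itemize}
\item[(i)] \emph{Pure pairing terms}, where the $2k$ occurrences of $h$-indices split into $k$ disjoint Gaussian-type contractions, each connecting two \emph{distinct} factors $X(z_a)$ and $X(z_b)$. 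These sum to $\sum_{\pi\in P_2([k])}\prod_{\{a,b\}\in\pi}\E[X(z_a)X(z_b) | \widehat H]$, since a pairing that contracts two indices \emph{within} the same $X(z_l)$ is precisely what the subtraction of $\widehat m_N(z_l)$ removes.
\item[(ii)] \emph{Higher-cumulant corrections}, arising whenever some index is used three or more times, or when two indices that live in the same factor are paired with two indices in another factor in a non-Gaussian way. By assumption \eqref{moments}, each such cumulant of order $p\ge 3$ carries an extra factor $N^{-(p-2)/2}$, so every correction term gains at least one factor $N^{-1/2}$ relative to the pairing terms.
\end{itemize}

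\textbf{Step 2: Estimating the correction terms.} Each correction is a sum over free indices of a product of entries of the resolvents $\widehat G(z_l)$, bounded using the entry-wise local law \eqref{eq:localSCentry} and the standard Ward identity $\sum_j |\widehat G_{ij}(z)|^2 = \Im \widehat G_{ii}(z)/\Im z$. A bookkeeping identical to the one leading to \eqref{trGG} turns every such contraction pattern into a bound of the form $\frac{1}{\sqrt{N\boldsymbol{\eta}}}\cdot\frac{1}{\sqrt{\eta_a}\,\Phi_{a,b}}$ for some $a\ne b$, using the lower bound $C|1-m(z_a)m(z_b)|\geq \Phi_{a,b}$ established in the proof of Lemma \ref{lemma:EXX}. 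Summing over the finitely many correction patterns yields exactly the error term $\frac{1}{\sqrt{N\boldsymbol\eta}}\sum_{a\ne b}\frac{1}{\sqrt{\eta_a}\Phi_{a,b}}$ claimed in \eqref{eq:pairings}.

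\textbf{Step 3: Removing the conditioning.} Having identified the Wick term, it remains to show
\[
\E\Bigl[\prod_{\{a,b\}\in\pi}\E[X(z_a)X(z_b)\,|\,\widehat H]\Bigr] = \prod_{\{a,b\}\in\pi}\E[X(z_a)X(z_b)] + \stO{\tfrac{1}{\sqrt{N\boldsymbol\eta}}\sum_{a\ne b}\tfrac{1}{\sqrt{\eta_a}\Phi_{a,b}}}.
\]
This follows from Lemma \ref{lemma:EXX}: each conditional pair correlation is, up to the small fluctuation controlled there, a deterministic function of $z_a,z_b$. Applying the product rule for stochastic domination ($X_i\prec Y_i\Rightarrow \prod X_i\prec \prod Y_i$) to the $k/2$ factors and using $|m|\leq 1$ to control the remaining deterministic pieces gives the claim.

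\textbf{Main obstacle.} The delicate point is the \emph{combinatorial} part of Step~1: one must check that within each factor $X(z_l)$ the subtraction of the conditional mean exactly cancels the intra-factor Gaussian contraction, so that only cross-factor pairings survive in (i), and simultaneously verify that every higher-cumulant pattern in (ii) reduces, after one application of Ward's identity and the local law, to a product that fits inside the single uniform envelope $\frac{1}{\sqrt{N\boldsymbol\eta}}\sum_{a\ne b}\frac{1}{\sqrt{\eta_a}\Phi_{a,b}}$ rather than a larger, less symmetric expression.
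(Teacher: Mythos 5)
The overall strategy you propose — condition on $\widehat H$, expand $\E[X(z_1)\cdots X(z_k)\,|\,\widehat H]$ as a sum over contraction patterns of the $h$-indices, and then un-condition using Lemma~\ref{lemma:EXX} — is indeed the right one, and it is the approach that the paper relies on by citing the corresponding lemma in the companion paper \cite{2016arXiv160805163E}; the paper itself gives no proof, only remarking that the sharper $\Phi_{a,b}$ (with the $\abs{x_a-x_b}^2$ term) propagates from Lemma~\ref{lemma:EXX} through the same argument.

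However, there is a genuine combinatorial gap in your Step~1. You claim that, once the centering removes all intra-factor contractions, the surviving Gaussian contraction patterns sum exactly to $\sum_{\pi\in P_2([k])}\prod_{\{a,b\}\in\pi}\E[X(z_a)X(z_b)\,|\,\widehat H]$, and that everything else is a higher-cumulant correction (your item (ii)). This is false for $k\ge 3$: there are purely Gaussian (second-moment) contraction patterns that are \emph{not} intra-factor and yet do \emph{not} factorize into disjoint pairs of factors. The simplest example is a cyclic pattern $j_1\!\sim\! i_2,\ j_2\!\sim\! i_3,\ \dots,\ j_m\!\sim\! i_1$ linking $m\ge 3$ distinct factors; every individual contraction is cross-factor, and every index is used exactly twice, so it fails both of the triggers in your item~(ii) — yet its contribution is $N^{-m}\Tr\bigl[\widehat G(z_{a_1})\cdots\widehat G(z_{a_m})\bigr]$, which is simply absent from the Wick product. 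These connected patterns appear even when the $h_i$ are exactly Gaussian (they are nothing more than the higher joint cumulants of the quadratic forms $X(z_l)$, not of the underlying entries), so they cannot be swept into a ``higher cumulant of $h$'' basket. A complete proof must bound them separately; a Ward-identity argument gives, e.g., $\bigl|\Tr[\widehat G_1\widehat G_2\widehat G_3]\bigr|\lesssim N\,\eta_3^{-1}(\eta_1\eta_2)^{-1/2}$, hence $N^{-3}\abs{\Tr[\cdots]}\ll (N\boldsymbol\eta)^{-1/2}\eta_a^{-1/2}\Phi_{a,b}^{-1}$ for $\eta_l\gg N^{-1}$, so they do fit inside the claimed envelope — but this estimate must actually be carried out, and your sketch does not do it. Your Step~3 also glosses over the passage from stochastic domination of $\E[X_aX_b\,|\,\widehat H]-d_{ab}$ to a bound on its (unconditional) expectation; this is standard given an a priori moment bound on the error, but should at least be flagged.
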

The error term in \eqref{eq:pairings} is slightly stronger than that in
\cite{MR3805203} since the $\Phi_{a,b}$ includes a $\abs
{x_a-x_b}^2$. This strengthening follows along the lines of the
original proof by using the more precise analysis of the self
consistent equation outlined in Lemma \ref{lemma:EXX}. We check that
integrating the error term from \eqref{eq:pairings} over $(I_{\eta
_0}^M)^k$, with $\eta_0$ being chosen as above according to the
regularity of $f$, again gives asymptotically $N^{-1/2}$ in the case of
bounded $f'$ and $N^{-1/6}$ in the general case. By integrating the
Wick type product and using \eqref{deltafN2moment} we therefore arrive at
\begin{align}\label{deltafNkmoment}
\E\left(\widehat\Delta_{f}^{(N)}\right)^k = \E\left(\widehat\Delta
_f\right)^k +
\begin{cases}
\stO{N^{-1/2}} & \text{if }f'\in L^\infty([-3,3]),\\
\stO{N^{-1/6}} & \text{else.}
\end{cases}
\end{align}
We note that the error terms are implicitly $k$-dependent. By counting
the number of pair partitions we find that, to the leading order in
$N$, the implicit coefficients scale like $C^k (k/2)!$ with a constant
depending on $f$.

Recalling \eqref{DeltafN} and the definition of $T_f^{(N)}$ from \eqref{weak},
we conclude that the overall fluctuations have moments
\begin{align}\label{mom}
\E\left( T_f^{(N)} \right)^k
= \E\left(\widehat\Delta_f \right)^k
+
\begin{cases}
\landauO{C^k(k/2)!N^{-1/2}} & \text{if }f'\in L^\infty([-3,3]),\\
\landauO{C^k(k/2)!N^{-1/6}} & \text{else.}
\end{cases}
\end{align}

Let $\phi_N(t)$ denote the characteristic function of $T_f^{(N)}$ and
$\phi(t)$ the characteristic function of the
Gaussian variable $\widehat\Delta_f$. Then the moment bound \eqref
{mom} implies that
\[
\abs{ \phi_N(t)-\phi(t)}\le CN^{-1/6} t e^{Ct^2}
\]
with some constant $C$ depending on $f$. Using the well-known bound
(see, e.g., \cite[Theorem 1.4.13.]{MR1745554} and the
references therein)
\[
d_L(F, G) \le\frac{1}{\pi}\int_0^T \abs{ \phi_F(t)-\phi_G(t)} \frac
{\diff t}{t} + \frac{2e\log T}{T}
\]
for any two distributions $F$ and $G$ with characteristic functions
$\phi_F$ and $\phi_G$, we immediately obtain \eqref{speed}
by choosing $T=c\sqrt{\log N}$. This completes the proof of Theorem~\ref
{mainThmDiag}.

\section{Off-diagonal entries}
For the decomposition
\[
H=\left(
\begin{matrix}
h_{11} & h_{12} & h_1^* \\
h_{21} & h_{22} & h_2^* \\
h_1 & h_2 &\widehat H
\end{matrix}
\right)
\]
we find from the Schur complement formula that
\begin{align*}G(z)_{12}&=-\frac
{g_{12}}{g_{11}g_{22}-g_{12}g_{21}}=-m(z)^2 g_{12}+\stO{\frac{1}{N\abs
{\eta}}},
\end{align*}
where $g_{ij}\defeq h_{ij}-\delta_{ij}z-\braket{h_i,G(z)h_j}$.

We now set $Y(z)=Y^{(N)}(z)\defeq\sqrt{N}\braket{h_1,\widehat G(z)
h_2}$ and begin to compute
(all summation indices run from 3 to $N$)
\begin{align}\label{eqYY}
\E\left[ Y(z)Y(z')|\widehat H\right]&=N\sum_{a,b,c,d}\E\left[
h_{1a}\widehat{G}_{ab}h_{b2}h_{1c}\widehat{G}'_{cd}h_{d2}|\widehat
H\right] \\ &=\frac{\sigma_2^2}{N}\sum_{a,b} \widehat{G}_{ab}\widehat
{G}'_{ab} +\stO{\frac{\Psi}{N}}= \frac{\sigma_2^2m(z)m(z')}{1-\sigma
_2m(z)m(z')} + \stO{\frac{\Psi}{\sqrt N\Phi}}\nonumber
\end{align}
and
\begin{align}\label{eqYYt}
\E\left[ Y(z)\overline{Y(\overline{z'})}|\widehat H\right]&=N\sum
_{a,b,c,d}\E\left[ h_{1a}\widehat{G}_{ab}h_{b2}h_{2c}\widehat
{G}'_{cd}h_{d1}|\widehat H\right] \\ &=\frac{1}{N}\sum_{a,b} \widehat
{G}_{ab}\widehat{G}'_{ba} +\stO{\frac{\Psi}{N}}= \frac
{m(z)m(z')}{1-m(z)m(z')} + \stO{\frac{\Psi}{\sqrt N\Phi}}.\nonumber
\end{align}
For both estimates we made use of the fact the $h_{ab}$ are centered
and therefore have to appear at least twice to have non-zero
expectation. The main contribution comes from the pairing
$a=d$, $b=c$. Some exceptional pairings, such as the
four-pairing $a=b=c=d$, were incorporated in the error term by their
reduced combinatorics.
From Proposition \ref{prop_int_pleijel} we then find that
\begin{align*}f(H)_{12}&=\frac{1}{\pi}\iint_{I_{\eta_0}^M} m(z)^2\left
[\braket{h_1,\widehat G(z) h_2}-h_{12}\right] \diff\eta\diff f(x) +\stO
{\frac{\norm{\diff f}}{N}}.
\end{align*}
For the second term it follows, just as before, that
\[
\frac{1}{\pi}\iint_{I_{\eta_0}^M} m(z)^2 h_{12} \diff\eta\diff f(x)=
h_{12}\int f(x)x\, \mu_{sc}(\diff x)+\stO{\eta_0}.
\]
For the first term we set
\[
\widetilde\Delta_{f}^{(N)}\defeq\iint_{I_{\eta_0}^M} m(z)^2 Y(z)\diff
\eta\diff f(x)
\]
and by a computation analogous to \eqref{Eint3a} using \eqref{eqYY} and
an expansion of the form
\[
\frac{a^3}{1-a}=\frac{a}{1-a}-a-a^2
\]
we arrive at
\begin{align*}
\E\left(\widetilde\Delta_{f}^{(N)}\right)^2 = V_{f,1}^{(\sigma_2)} -
V_{f,2} -\sigma_2V_{f,3}+
\begin{cases}
\stO{N^{-1/2}} & \text{if }f'\in L^\infty([-3,3]),\\
\stO{N^{-1/6}} & \text{else.}
\end{cases}
\end{align*}
Similarly, from \eqref{eqYYt} we find that
\begin{align*}\E\abs{\widetilde\Delta_{f}^{(N)}}^2 &= V_{f,1} -
V_{f,2}-V_{f,3}+
\begin{cases}
\stO{N^{-1/2}} & \text{if }f'\in L^\infty([-3,3]),\\
\stO{N^{-1/6}} & \text{else.}
\end{cases}
.
\end{align*}
Finally, due to a Wick type theorem for $Y(z)$ which is proved along
the lines of Lemma \ref{lemma:pairings} we arrive at
\begin{align}
\E\left(S_f^{(N)}\right)^k \left(\overline{ S_f^{(N)}}\right)^l = \E
\left(\widetilde\Delta_{f}\right)^k \left(\overline{\widetilde\Delta
_{f}}\right)^l+
\begin{cases}
\stO{N^{-1/2}} & \text{if }f'\in L^\infty([-3,3]),\\
\stO{N^{-1/6}} & \text{else,}
\end{cases}
\end{align}
where $\widetilde\Delta_f$ is a centered complex Gaussian such that
\[
\E\widetilde\Delta_{f}^2 = V_{f,1}^{(\sigma_2)} - V_{f,2} -\sigma
_2V_{f,3},\quad\E\abs{\widetilde\Delta_{f}}^2 = V_{f,1} - V_{f,2}-V_{f,3}.
\]
We have proven Theorem \ref{mainThmOffDiag}.
\printbibliography%

\end{document}